\newtheorem{thm}{Theorem}[section]
\newtheorem{prop}[thm]{Proposition}
\newtheorem{cor}[thm]{Corollary}
\theoremstyle{definition}
\newtheorem{exas}[thm]{Examples}
\theoremstyle{remark}
\numberwithin{equation}{section}
\newcommand{\vanish}[1]{\relax}       
\def\qedsymbol{\hbox to 1ex{\llap{\rule{0.25pt}{1ex}}\rlap{\rule{1ex}{0.25pt}}\lower0.25pt\rlap{\raise1ex\rlap{\rule{1ex}{0.25pt}}}\hskip1ex\llap{\rule{0.25pt}{1ex}}}}
\def\rlqed{\rlap{\rule{\hsize}{0pt}\kern-1ex\kern-1em\qed}} 
\newcounter{aufzi}
\newenvironment{aufzi}{\begin{list}{ {\upshape\alph{aufzi})}}{
        \usecounter{aufzi}
        \topsep1ex
        \parsep0cm
        \itemsep0.8ex
        \leftmargin1cm
        \labelwidth0.5cm
        \labelsep0.3cm
}}
{\end{list}}
\newcounter{aufzii}
\newcounter{aufziii}
\newenvironment{aufziii}{\begin{list}{ {\upshape\arabic{aufziii})}}{
        \usecounter{aufziii}
        \topsep1ex
        \parsep0cm
        \itemsep0.8ex
        \leftmargin1cm
        \labelwidth0.5cm
        \labelsep0.3cm
}}
{\end{list}}
\newcommand{\calA}{\mathcal{A}}
\newcommand{\calB}{\mathcal{B}}
\newcommand{\calF}{\mathcal{F}}
\newcommand{\calG}{\mathcal{G}}
\newcommand{\calH}{\mathcal{H}}
\newcommand{\calM}{\mathcal{M}}
\newcommand{\calU}{\mathcal{U}}
\newcommand{\calV}{\mathcal{V}}
\newcommand{\scrP}{\mathscr{P}}
\newcommand{\frakM}{\mathfrak{M}}
\def\bfA{\mathbf{A}}
\def\bfB{\mathbf{B}}
\def\bfC{\mathbf{C}}
\def\bfV{\mathbf{V}}							
\def\bfW{\mathbf{W}}	
\def\bfX{\mathbf{X}}		
\def\bfY{\mathbf{Y}}
\def\bfZ{\mathbf{Z}}
\def\uC{\mathrm{C}}
\def\uD{\mathrm{D}}
\def\uG{\mathrm{G}}
\def\uK{\mathrm{K}}
\def\uL{\mathrm{L}}
\def\uM{\mathrm{M}}
\def\uN{\mathrm{N}}
\def\uX{\mathrm{X}}
\newcommand{\N}{\mathbb{N}}
\newcommand{\Z}{\mathbb{Z}}
\newcommand{\sdif}{\triangle}      
\newcommand{\Bigcap}[2][\relax]{%
 \ifx#1\relax \bigcap_{#2}
 \else \bigcap^{#1}_{#2}
 \fi}
\newcommand{\Bigcup}[2][\relax]{%
 \ifx#1\relax \bigcup_{#2}
 \else \bigcup^{#1}_{#2}
 \fi}
\newcommand{\Sup}{\,\vee\,}
\newcommand{\Inf}{\,\wedge\,}
\def\Homeo{\mathrm{Homeo}}              
\DeclareMathOperator{\Hom}{Hom}
\DeclareMathOperator{\Aut}{Aut}
\def\fact#1#2{#1/#2}
\def\tfact#1#2{#1/#2}
\def\fact#1#2{{\raise0.2em\hbox{$#1$}\kern-0.2em/\kern-0.1em\lower0.2em\hbox{$#2$}}}
\def\tfact#1#2{{\raise0.1em\hbox{\small$#1$}\kern-0.1em/\kern-0.1em\lower0.1em\hbox{\small$#2$}}}
\newcommand{\norm}[2][\relax]{
   \ifx#1\relax \ensuremath{\left\Vert#2\right\Vert}
   \else \ensuremath{\left\Vert#2\right\Vert_{#1}}
   \fi}
\newcommand{\Bnorm}[2][\relax]{
   \ifx#1\relax \ensuremath{\Bigl\Vert#2\Bigr\Vert}
   \else \ensuremath{\Bigl\Vert#2\Bigr\Vert_{#1}}
   \fi}
\newcommand{\tdprod}[2]{\ensuremath{%
  \setbox0=\hbox{\ensuremath{\langle#1,#2 \rangle}}
  \dimen@\ht0
  \advance\dimen@ by \dp0 (#1\rule[-\dp0]{0pt}{\dimen@}\,|#2\hspace{1pt})}}
\newcommand{\dprod}[2]{\ensuremath{%
  \setbox0=\hbox{\ensuremath{\left\langle#1,#2\right\rangle}}
  \dimen@\ht0
  \advance\dimen@ by \dp0 \left\langle\left.#1\rule[-\dp0]{0pt}{\dimen@}\,\right|#2\hspace{1pt}\right\rangle}}
\newcommand{\bdprod}[2]{\ensuremath{%
  \setbox0=\hbox{\ensuremath{\bigl\langle#1,#2\bigr\rangle}}
  \dimen@\ht0
  \advance\dimen@ by \dp0 \bigl\langle#1\bigl|\rule[-\dp0]{0pt}{\dimen@}\bigr.#2\hspace{1pt}\bigr\rangle}}
\newcommand{\Bdprod}[2]{\ensuremath{%
  \setbox0=\hbox{\ensuremath{\Bigl\langle#1,#2\Bigr\rangle}}
  \dimen@\ht0
  \advance\dimen@ by \dp0 \Bigl\langle#1\Bigl|\rule[-\dp0]{0pt}{\dimen@}\Bigr.#2\hspace{1pt}\Bigr\rangle}}
\newcommand{\tsprod}[2]{\ensuremath{%
  \setbox0=\hbox{\ensuremath{(#1,#2)}}
  \dimen@\ht0
  \advance\dimen@ by \dp0 (#1\rule[-\dp0]{0pt}{\dimen@}\,|#2\hspace{1pt})}}
\newcommand{\sprod}[2]{\ensuremath{%
  \setbox0=\hbox{\ensuremath{\left(#1,#2\right)}}
  \dimen@\ht0
  \advance\dimen@ by \dp0 \left(\left.#1\rule[-\dp0]{0pt}{\dimen@}\,\right|#2\hspace{1pt}\right)}}
\newcommand{\bsprod}[2]{\ensuremath{%
  \setbox0=\hbox{\ensuremath{\bigl(#1,#2\bigr)}}
  \dimen@\ht0
  \advance\dimen@ by \dp0 \bigl(#1\bigl|\rule[-\dp0]{0pt}{\dimen@}\bigr.#2\hspace{1pt}\bigr)}}
\newcommand{\Bsprod}[2]{\ensuremath{%
  \setbox0=\hbox{\ensuremath{\Bigl(#1,#2\Bigr)}}
  \dimen@\ht0
  \advance\dimen@ by \dp0 \Bigl(#1\Bigl|\rule[-\dp0]{0pt}{\dimen@}\Bigr.#2\hspace{1pt}\Bigr)}}
\newcommand{\Ce}{\mathrm{C}}
\newcommand{\Ell}[2][\relax]{
   \ifx#1\relax \mathrm{L}^{\mathrm{#2}}
   \else \mathrm{L}^{\mathrm{#2}}_{\mathrm{#1}}
   \fi}
\renewcommand{\Ell}[2][\relax]{
   \ifx#1\relax \mathrm{L}^{\!#2}
   \else \mathrm{L}^{\!#2}_{\mathrm{#1}}
   \fi}
\newcommand{\Wee}[2][\relax]{
   \ifx#1\relax \mathrm{W}^{\mathrm{#2}}
   \else \mathrm{W}^{\mathrm{#2}}_{\mathrm{#1}}
   \fi}
\newcommand{\Har}[2][\relax]{
   \ifx#1\relax \mathsf{H}^{\mathsf{#2}}
   \else   \mathsf{H}^{\mathsf{#2}}_{\mathrm{#1}}
   \fi}
\def\prX{\mathrm X}    
\def\rlqed{\rlap{\rule{\hsize}{0pt}\kern-1ex\kern-1em\qed}}
\def\maketag@@@@@#1{\llap{\hbox to\hsize{\m@th\normalfont#1}}%
\gdef\tagform@##1{\maketag@@@{(\ignorespaces##1\unskip\@@italiccorr)}}}
\def\eqtext#1{\gdef\tagform@##1{\maketag@@@@@{\ignorespaces##1\unskip\@@italiccorr\hfill}}\tag{#1}}%
\def\reqtext#1{\gdef\tagform@##1{\maketag@@@@@{\hfill\ignorespaces##1\unskip\@@italiccorr}}\tag{#1}}%
\def\leqtext#1{\gdef\tagform@##1{\maketag@@@@@{\ignorespaces##1\unskip\@@italiccorr}}\tag{#1}}%
\newcommand{\SET}{\mathsf{Set}}
\newcommand{\FPow}{\scrP_0}
\newcommand{\Powa}{\scrP}
\newcommand{\SLPow}[1]{\mathrm{Lat}(#1)}
\newcommand{\SLPowg}[2]{\mathrm{Lat}_{#1}(#2)}
\newcommand{\SLLPow}[1]{\mathrm{LocLat}(#1)}
\newcommand{\SLLPowg}[2]{\mathrm{LocLat}_{#1}(#2)}
\newcommand{\Sub}[2]{\mathsf{Sub}_{#1}(#2)}
\newcommand{\Suba}[1]{\mathsf{Sub}(#1)}
\newcommand{\SSubm}[3]{\mathsf{SpSub}_{#1}^{#3}(#2)}
\newcommand{\SSubam}[2]{\mathsf{SpSub}^{#2}(#1)}
\newcommand{\Rp}{\mathbb{R}_{\geq 0}}
\newcommand{\MDL}{\mathsf{ML}}
\newcommand{\OBS}{\mathsf{Obs}}
\newcommand{\PSYS}{\mathsf{PhysSys}}
\newcommand{\CVAL}{\mathsf{CompVal}}
\newcommand{\Rep}[2]{\mathsf{Rep}_{#1}^{#2}}
\newcommand{\TOP}{\mathsf{Top}}
\newcommand{\PSP}{\mathsf{Prob}}
\newcommand{\op}{\mathrm{op}}
\newcommand{\KS}{\mathrm{KS}}
\newcommand{\ev}{\mathrm{ev}}
\newcommand{\eqdef}{\overset{\mathrm{def}}{=\joinrel=}}
\DeclareMathOperator{\ob}{Obj}
\date{\today}
\begin{document}

\title[Measurement functors and entropy]{Categories of Measurement Functors. \\  Entropy of Discrete Amenable Group \\ Representations on Abstract Categories. \\
Entropy as a Bifunctor into $[0,\infty]$.}

\author[Nikita Moriakov]{Nikita Moriakov}
\address{Delft Institute of Applied Mathematics, Delft University of Technology,
P.O. Box 5031, 2600 GA Delft, The Netherlands}

\email{n.moriakov@tudelft.nl}

\subjclass{Primary  28D20, 37A35, 37B40; Secondary 94A17, 18A99, 18F99}
\renewcommand{\subjclassname}{\textup{2000} Mathematics Subject
    Classification}

\thanks{The author kindly acknowledges the support from ESA CICAT of TU Delft}
\date{\today}

\begin{abstract}
The main purpose of this article is to provide a common generalization of the notions of a topological and Kolmogorov-Sinai entropy for arbitrary representations of discrete amenable groups on  objects of (abstract) categories. This is performed by introducing the notion of a measurement functor from the category of representations of a fixed amenable group $\Gamma$ on objects of an abstract category $\mathrm{C}$ to the category of representations of $\Gamma$ on distributive lattices with localization. We develop the entropy theory of representations of $\Gamma$ on these lattices, and then define the entropy of a representation of $\Gamma$ on objects of the category $\mathrm{C}$ with respect to a given measurement functor. For a fixed measurement functor, this entropy decreases along arrows of the category of representations. For a fixed category, entropies defined via different measurement functors decrease pointwise along natural transformations of measurement functors. We conclude that entropy is a bifunctor to the poset of extended positive reals. As an application of the theory, we show that both topological and Kolmogorov-Sinai entropies are instances of entropies arising from certain measurement functors.
\end{abstract}

\maketitle

\section{Introduction}
\label{s.intro}

The first results unifying topological and Kolmogorov-Sinai entropy theories for $\Z$-actions were obtained by G\"{u}nther Palm in his PhD dissertation. The goal of his research was finding a generalization of these theories in the language of functional analysis. We review his approach briefly. To every topological dynamical system on a topological space $\uK$ one can associate the corresponding Koopman representation on the Banach lattice $\Ce(\uK)$ of continuous functions. Similarly, to every measure-preserving dynamical system on a probability space $\prX$ one can associate the corresponding Koopman representation on the Banach lattice $\Ell{1}(\prX)$ of integrable functions. These classical lattices are nowadays called \emph{Banach lattices of observables.} Taking the set of closed Banach lattice ideals of these lattices, one retrieves the collection of closed and measurable sets respectively, and also the dynamics on the underlying spaces by using some appropriate duality theorems. Both types of lattices are examples of Banach lattices with quasi-interior points\footnote{See \cite{schaefer} for the definitions.}. It is well-known that the set of closed ideals of a Banach lattice is in fact a distributive lattice under some natural lattice operations. It is easy to see that an action of $\Z$ on a Banach lattice with quasi-interior point induces an action on its distributive lattice of closed ideals. For this action, Palm introduced a concept of entropy that coincides with the classical notions of entropy when the underlying Banach lattices with quasi-interior points are the classical lattices of observables. For the details we refer to \cite{palm}. In this work, however, we significantly deviate from the original approach for the following reasons.

Firstly, we have not seen much usage of Banach lattices with quasi-interior points in the theory of dynamical systems so far. It is not clear why these lattices should be the proper setup for a `very general entropy theory', since they appear to be a pure functional analysis phenomenon. Unfortunately, representation results such as the Kakutani representation theorem \cite{schaefer} do not answer all the relevant questions. For instance, the entropy is defined by looking at the induced action on the distributive lattice of closed lattice ideals. However, the structure of this lattice of ideals is in general not very well described through representation results. Thus the structure of the lattices of open (measurable) sets becomes substantially more obscure when we pass to the lattice of observables.

Secondly, in both topological and Kolmogorov-Sinai entropy theories, the entropy of a system is greater or equal than the entropy of its factors, i.e. entropy decreases along the arrows of the category. However, we were not able to prove this statement in the setting of Palm's theory - there is no notion of a factor to start with - and there is a `structural' counterexample at the level of distributive lattices that we shall provide in Section \ref{ss.structcex}. Hence, the structure of an abstract distributive lattice is not sufficient, and the additional structure that we see when considering lattices of closed ideals of Banach lattices with quasi-interior points is obscure. Therefore, we chose to shift the focus away from Banach lattices and search for a way to impose additional structure on lattices via an explicit `functorial' correspondence instead.

Finally, the theories of entropy for general discrete amenable group actions based on the Ornstein-Weiss lemma were not even developed at the time of Palm's work.

These considerations have lead to the approach described in this work. Our goal is to develop a concept of entropy for representations of discrete amenable groups on arbitrary categories that would
\begin{aufziii}
\item be functorial in nature, i.e. would give entropy of an arbitrary representation $\pi$ on object $\bfA$ of an abstract category $\uC$, once a \emph{measurement functor}, describing some underlying structure of a distributive lattice on $\bfA$, is supplied;

\item decrease along arrows of the category of representations as in the classical setting;

\item behave `reasonable' when one has two measurement functors with an `arrow' between them;

\item generalize the classical notions of entropy for discrete amenable group actions.
\end{aufziii}

These goals have been accomplished in this work. The innermost core of the proposed theory is a modified (localized) notion of Palm's entropy for measured distributive lattices, and the surrounding body of categorical machinery is built to resolve the issues mentioned above and to make the core concept work best within modern mathematical frameworks.

We will transfer the structure from our categories of dynamical systems to the category of representations on \emph{measured distributive lattices with localization} (see Section \ref{ss.dynlatdef}) via a functorial correspondence, thus we have to prepare some categorical language. Let $\Gamma$ be an arbitrary group, and let $\uC$ be some category\footnote{Unless stated otherwise, all categories are assumed to be locally small \cite{maclane}.}. The \textbf{category of representations of $\Gamma$ on} $\uC$ is the category $\Rep{\Gamma}{\uC}$ with $\ob(\Rep{\Gamma}{\uC})$ being the collection of all pairs $(\bfA, \pi)$ where $\bfA \in \ob(\uC)$ and $\pi: \Gamma \to \Aut(\bfA)$ is a group homomorphism. For any objects $(\bfA,\pi)$, $(\bfB,\rho)$ in $\ob(\Rep{\Gamma}{\uC})$ we define the set of morphisms $\Hom((\bfA,\pi),(\bfB,\rho))$ as those $\psi \in \Hom(\bfA,\bfB)$ s.t. for every $\gamma \in \Gamma$ the diagram
\begin{equation*}
 \xymatrix{ \bfA \ar[r]^{\psi} & \bfB  \\
               \bfA \ar[u]^{\pi_{\gamma}} \ar[r]_{\psi} & \bfB \ar[u]_{\rho_{\gamma}} }
\end{equation*}
commutes. Then $\bfB$ is called a \textbf{factor} of $\bfA$. We do not require $\psi$ above to be an epimorphism in order to comply with the established categorical terminology in dynamical systems. When it is clear what are the representations, we will use the notation $\Hom_{\Gamma}(\bfA,\bfB)$ instead of $\Hom((\bfA,\pi),(\bfB,\rho))$ for the set of $\Gamma$-equivariant morphisms.

Taking measurements\footnote{See the introductory part of \cite{gromovi} for a motivation of this viewpoint in `physical' systems.} can be seen as a covariant \emph{measurement functor} $\uM$ from the category $\Rep{\Gamma}{\uC}$ to the category $\Rep{\Gamma}{\MDL}$ of representations on a certain category of distributive lattices that we will introduce in Section \ref{ss.dynlatdef}. On this category, as we will see in this article, one can define entropy of a group action for discrete amenable $\Gamma$. That is, for every $(\bfX,\pi) \in \Rep{\Gamma}{\MDL}$ we define some nonnegative extended real value $h_{\MDL}(\bfX,\pi)$. Thus we can define entropy on $\Rep{\Gamma}{\uC}$ with respect to a measurement functor $\uM$ by
\begin{equation}
\label{eq.entfunc}
h((\bfA,\pi),\uM):=h_{\MDL}(\uM(\bfA,\pi)).
\end{equation}

In general it is not clear if for a given category of representations an interesting measurement functor exists at all, but we do expect that it exists in topological and measure-theoretic dynamics, where one takes distributive lattices of open/measurable subsets for the distributive lattice respectively. So we will provide
a construction of measurement functors $\uM_{\PSP}$ and $\uM_{\TOP}$ that yield the classical notions of entropy: Kolmogorov-Sinai entropy and topological entropy respectively.

We will now briefly outline the structure of the article. In Section \ref{s.adlmeas} we introduce the key notions of a measured lattice with localization, an abstract dynamical lattice and a measurement functor. At the end of the section we provide the construction of the measurement functors $\uM_{\TOP}$ and $\uM_{\PSP}$.

We begin Section \ref{s.entropy} by recalling the notion of an amenable group and stating the Ornstein-Weiss lemma, Proposition \ref{p.ornsteinweiss}. This lemma allows to introduce the entropy $h_{\MDL}$ of representations of discrete amenable groups on measured lattices with localization.

We then define entropy of a representation on a category with respect to the given measurement functor, and show that this entropy decreases along morphisms in $\Rep{\Gamma}{\uC}$. We remind the reader of the notion of a \emph{natural transformation} between functors, and also prove that the entropy decreases pointwise on $\Rep{\Gamma}{\uC}$ along natural transformations of measurement functors. We introduce the category of measurement functors $[\Rep{\Gamma}{\uC},\Rep{\Gamma}{\MDL}]_{\frakM}$, which allows to view evaluation $\ev: (\bfA,\uM) \mapsto \uM(\bfA)$ as a bifunctor from the product category $\Rep{\Gamma}{\uC} \times [\Rep{\Gamma}{\uC},\Rep{\Gamma}{\MDL}]_{\frakM}$ to $\Rep{\Gamma}{\MDL}$. We conclude in Corollary \ref{cor.entropbifunc} that the entropy defined in equation (\ref{eq.entfunc}) is also a bifunctor from the corresponding product category to the poset of reals, i.e. we have
\begin{equation*}
\Rep{\Gamma}{\uC} \times [\Rep{\Gamma}{\uC},\Rep{\Gamma}{\MDL}]_{\frakM} \overset{h(\cdot,\cdot)}{\longrightarrow} [0, \infty].
\end{equation*}

Finally, we prove that the entropy defined with the help of measurement functors from Section \ref{s.adlmeas} simultaneously generalizes topological and Kolmogorov-Sinai entropy.

I would like to thank my advisor Markus Haase for suggesting to look into Palm's work and for the numerous invaluable comments and corrections he has provided.

\section{Abstract dynamical lattices and measurement functors}
\label{s.adlmeas}
\subsection{Category of lattices with localization}
\label{ss.dynlatdef}
For convenience we use the term \textbf{distributive lattice} for distributive lattices with $0$ (`bottom') and $1$ (`top') s.t. $0 \neq 1$. The set of all finite subsets of a set $\uX$ is denoted by $\FPow(\uX)$. When talking about lattice embeddings of distributive lattices, we assume that these embeddings respect the top and bottom elements. Let $V$ be a distributive lattice. A finite subset $\alpha \in \FPow(V)$ is called a \textbf{cover} if $\sup \alpha = 1$. Clearly, lattice embeddings map covers to covers. The set of all covers of $V$ is denoted by $\Sigma_V$. The set of all distributive sublattices of $V$ is denoted by $\SLPow{V}$. The set of all distributive sublattices of $V$ containing a given cover $\alpha$ is denoted by $\SLPowg{\alpha}{V}$.

We will now prepare the key ingredients for the definition of the category of measured lattices with localization. A function $m: V \to \Rp$ is called a \textbf{measurement function} if it satisfies the conditions that
\begin{aufzi}
\item $m(0) = 0, m(1) \neq 0$
\item $m(a) = 0 \Rightarrow m(a \Sup b) = m(b)$ for all $b \in V$.
\end{aufzi}

Measurement functions tell us how big or `likely' the elements of $V$ are. A function $\Omega: \Sigma_V \to \SLPow{V}$ is called a \textbf{localization function} if for every cover $\alpha \in \Sigma_V$ the sublattice $\Omega(\alpha) \subseteq V$ contains $\alpha$.  Later we will think of $\Omega(\alpha)$ as `the smallest subsystem which realizes $\alpha$'.

The category $\MDL$ of \textbf{measured lattices with localization} is defined as follows. Objects of the category $\MDL$ are all triples $(V,m, \Omega)$, where

\begin{aufzi}
\item $V$ is a distributive lattice;

\item $m: V \to \Rp$ is a measurement function;

\item $\Omega: \Sigma_V \to \SLPow{V}$ is a localization function.
\end{aufzi}
To complete the definition of the category of measured lattices with localization we also need to say what the arrows are. Let $\bfV=(V,m_1,\Omega_1),\bfW=(W,m_2,\Omega_2)$ be a pair of distributive lattices with localization. Let $\Phi: W \to V$ be a lattice embedding such that
\begin{aufzi}
\item $\Phi$ preserves the measurement function, i.e. $m_1(\Phi(a)) = m_2(a)$ for all elements $a \in W$;

\item $\Phi$ preserves the localization function, i.e. $\Omega_1(\Phi(\alpha)) = \Phi (\Omega_2(\alpha))$ for all covers $\alpha \in \Sigma_W$.
\end{aufzi}

Then we call $\Phi^{\op}$ a morphism between $\bfV$ and $\bfW$ and write $\Hom(\bfV, \bfW)$ for the collection of all morphisms obtained this way. The superscript `$\op$' in $\Phi^{\op}$ above indicates that even though $\Phi$ is a mapping from $W$ to $V$, the morphism of lattices determined by $\Phi$ points in the opposite direction. We will typically define morphisms by defining corresponding lattice embeddings, so we write `$\op$' to avoid confusion about direction of morphism. We have chosen to `switch the arrows' so that the arrows in $\MDL$ go in the same direction as the arrows in some category $\uC$ that is being `represented' on $\MDL$. It will be clear later on what we precisely mean.

Let $W \subseteq V$ be a sublattice of a measured distributive lattice with localization $(V,m,\Omega)$. We call $W$ a \textbf{local sublattice} of the lattice $V$ if $\Omega(\Sigma_W) \subseteq \SLPow{W}$, that is, if for every cover $\alpha \in \Sigma_W$ the corresponding localization $\Omega(\alpha)$ is actually a sublattice of $W \subseteq V$. Hence if $W \subseteq V$ is a local sublattice, we obtain a measured lattice with localization $(W,m,\Omega)$ from $(V,m,\Omega)$ by restricting the functions $m$ and $\Omega$. We denote by $\SLLPow{\bfV}$ the set of local sublattices of $\bfV=(V,m,\Omega)$, and by $\SLLPowg{\alpha}{\bfV}$ the set of local sublattices containing a cover $\alpha \in \Sigma_V$. Both of these sets are nonempty, since they contain at least $V$. Intuitively speaking, we are only interested in local sublattices of a lattice $V$ because they correspond to `subsystems' in the category $\MDL$. If $\bfV=(V,m,\Omega)$ is a measured lattice with localization, we will often write $\Sigma_{\bfV}$ to denote the set $\Sigma_V$ of covers of $V$.

We now provide the classical examples of lattices with localization. These are not the simplest examples possible, but are the most important ones for the purposes of this work. We will also return to these examples when discussing measurement functors at the end of Section \ref{ss.measfunctd}.

\subsubsection{Topological lattices}
\label{ex.toplatint}
Consider the category $\TOP$ of nonempty compact Hausdorff spaces with surjective continuous maps as morphisms. Objects of this category are the pairs $\bfA = (\uK,\calU)$, where $\calU$ is a compact Hausdorff topology on a set $\uK$. The nonempty collection of open sets $\calU$ is a distributive lattice under the operations of set union and intersection. The measured lattice with localization $(\calU,m,\Omega)$ associated with $\bfA$ can now be introduced. The measurement function $m$ is defined by
\begin{equation*}
m(A) := \begin{cases}
    1 & \text{if } A \text{ is a nonempty;}\\
    0 &  \text{otherwise.}
  \end{cases}
\end{equation*}
The localization function $\Omega$ maps an open cover $\alpha \in \Sigma_{\calU}$ to the smallest topology generated by $\alpha$. We denote by $\overline \bfA$ the lattice $(\calU,m,\Omega)$ obtained this way. It is easy to see that every morphism $\phi: \bfA \to \bfB$ in $\TOP$ induces a morphism $\overline \phi: \overline \bfA \to \overline \bfB$ by taking preimages of open sets. Hence the correspondence $\bfA \to \overline \bfA$ is in fact a covariant functor $\uM_{\TOP}^0$ between $\TOP$ and $\MDL$.

\subsubsection{Measure lattices}
\label{ex.measlatint}
Now we consider the category $\PSP$ of standard probability spaces with equivalence classes of measure-preserving maps as morphisms.  Objects of this category are triples $\bfX = (\prX,\calM,\mu)$, where $\calM$ is the measure algebra of a standard probability space $\prX$. The nonempty collection of equivalence classes of measurable sets $\calM$ is a distributive lattice under operations of union and intersection. We introduce the measured lattice with localization $(\calM,m,\Omega)$, associated with $\bfX$. The measurement function is defined by $m(a): = \mu(a)$ for all $a \in \calM$. The localization function $\Omega$ takes a cover $\alpha \in \Sigma_{\calM}$ and maps it to the smallest measure algebra generated by $\alpha$. We denote by $\overline \bfX$ the lattice $(\calM,m,\Omega)$ obtained this way. Every morphism $\phi: \bfX \to \bfY$ in $\PSP$ induces a morphism $\overline \phi: \overline \bfX \to \overline \bfY$ by taking preimages of measurable sets. Hence the correspondence $\bfX \to \overline \bfX$ is in fact a covariant functor $\uM_{\PSP}^0$ between $\PSP$ and $\MDL$.

\subsection{Representations on the category of lattices with localization}
Now consider the associated category of representations $\Rep{\Gamma}{\MDL}$, and call it the category of \textbf{abstract dynamical lattices}. The objects of $\Rep{\Gamma}{\MDL}$ are pairs $(\bfV,\pi)$ of a measured distributive lattice with localization $\bfV = (V,m,\Omega)$ and a representation $\pi$ of $\Gamma$ in $\Aut(\bfV)$. To simplify the notation we will often write $(V,m,\Omega; \pi)$ in place of $((V,m,\Omega),\pi)$ to denote abstract dynamical lattices. Given $(\bfV,\pi) \in \ob(\Rep{\Gamma}{\MDL})$ we write $\SLLPow{\bfV,\pi}$ to denote the set of all $\Gamma$-invariant local sublattices $W$ of $V$, and similarly we write $\SLLPowg{\alpha}{\bfV,\pi}$ to denote the set of all local $\Gamma$-invariant sublattices of $V$ containing a cover $\alpha \in \Sigma_V$. Every $\Gamma$-invariant local sublattice $W \subseteq V$ yields an abstract dynamical lattice $\bfW = (W,m,\Omega; \rho)$, where we take $\rho$ to be the restriction of the representation $\pi$ to $W$. If $\imath: W \to V$ is the inclusion mapping for lattices $W$ and $V$ as above, then $\imath^{\op} \in \Hom_{\Gamma}(\bfV,\bfW)$ is a factor map. Conversely, every factor $\bfW$ of $\bfV$ corresponds to a $\Gamma$-invariant local sublattice of $V$, but factors isomorphic in the category $\Rep{\Gamma}{\MDL}$ can of course have distinct corresponding local sublattices. We will now make this relation between sublattices and factors more precise by introducing some language from category theory.

The sets of $\Gamma$-invariant local sublattices $\SLLPow{\bfV,\pi}$ and $\SLLPowg{\alpha}{\bfV,\pi}$ have canonical structures of small\footnote{That is, the collection of objects is in fact a set.} categories. We will now explain this structure for $\SLLPowg{\alpha}{\bfV,\pi}$ and define the category $\Sub{\alpha}{\bfV,\pi}$ (with obvious modifications in the definition of $\Suba{\bfV,\pi}$). The set of objects $\ob(\Sub{\alpha}{\bfV,\pi})$ is by definition $\SLLPowg{\alpha}{\bfV,\pi}$. For two objects $\bfX,\bfY \in \ob(\Sub{\alpha}{\bfV,\pi})$ with corresponding $\Gamma$-invariant local sublattices $X, Y \subseteq V$ we set
\begin{equation*}
\Hom(\bfX,\bfY) := \begin{cases}
    \imath^{\op} & \text{if } Y  \overset{\imath}{\subseteq} X \text{ is a sublattice}\\
    \varnothing &  \text{otherwise.}
  \end{cases}
\end{equation*}

So in particular for any two objects $\bfX,\bfY \in \ob(\Sub{\alpha}{\bfV,\pi})$ the set of morphisms consists of at most one arrow, and it might be empty. Thus a final object in a subcategory $\uD$ of $\Sub{\alpha}{\bfV,\pi}$ is a $\Gamma$-invariant local sublattice of $V$ containing $\alpha$ that is contained in any other lattice in $\uD$. Furthermore, there can be only one final object. Category $\Sub{\alpha}{\bfV,\pi}$ is a full subcategory of $\Suba{\bfV,\pi}$.

So far we have given a `concrete' definition of the category $\Suba{\bfV,\pi}$ of local $\Gamma$-invariant sublattices, but it is in fact an instance of a general category-theoretic notion of \textbf{category of subobjects}\footnote{This should not be confused with the notion of a subobject poset, the difference is that we do not require objects to be monomorphisms.}. For the moment, let $\uC$ be a category and let $\bfA$ be a fixed object of $\uC$. We define the category $\Suba{\bfA}$ as follows. Let $S$ be the collection of all arrows $\phi: \bfA \to \bfB$ in $\uC$. Given arrows $\phi: \bfA \to \bfB$ and $\psi: \bfA \to \bfC$ in $S$, we let $\Hom(\phi,\psi)$ be the set of morphisms $\zeta: \bfB \to \bfC$ in $\uC$ such that $\zeta \circ \phi = \psi$. Finally, we define $\ob(\Suba{\bfA})$ as the collection of isomorphism classes of elements of $S$ modulo isomorphisms in $\Hom$ we defined above, and for $[\phi],[\psi] \in \ob(\Suba{\bfA})$ we take $\Hom([\phi],[\psi])$ to be the corresponding set\footnote{Note that this is indeed a set since $\uC$ is a locally small category.} of equivalence classes of morphisms in $\Hom(\phi,\psi)$.

It is straightforward to check that the category of sublattices $\Suba{\bfV,\pi}$ we have defined for distributive lattices is in fact the category of subobjects of $(\bfV,\pi)$. The functor $\uG$ giving this category equivalence takes an arrow $$\imath^{\op}: (V,m_V,\Omega_V; \pi) \to (W,m_W,\Omega_W; \rho)$$ and maps it to the $\Gamma$-invariant local sublattice of $V$ determined by the range of the lattice embedding $\imath$. Equivalent arrows have identical range, and morphisms between arrows define inclusions between the ranges. Hence there is actually no ambiguity in the notation for subobject. Furthermore, this shows that the collection of objects in $\Suba{\bfV,\pi}$ is in fact a set. This correspondence between arrows and sublattices is a generalization of the well-known correspondence between factors of a given measure-preserving dynamical system $\bfX$ and coordinatizations of these factors as subalgebras in ergodic theory.

We also need to remind the reader of the notion of a \textbf{coproduct} from category theory. It is a `dual' notion to the notion of product. Let $\uC$ be a category, and let $\{ \bfX_{\lambda} \}_{\lambda \in \Lambda}$ be a family of objects of this category indexed by a set $\Lambda$. The coproduct of this family is a pair of an object $\bfX \in \uC$ and a collection of morphisms $\{ \pi_{\lambda} \}_{\lambda \in \Lambda}$, $\pi_{\lambda}: \bfX_{\lambda} \to \bfX$ such that for any object $\bfY \in \uC$ and a collection of morphisms $\{ \rho_{\lambda }\}_{\lambda \in \Lambda}$, $\rho_{\lambda}: \bfX_{\lambda} \to \bfY$ there exists a unique morphism $\phi: \bfX \to \bfY$ s.t. $\phi \circ \pi_{\lambda} = \rho_{\lambda}$ for all indices $\lambda \in \Lambda$. A standard argument then shows that coproducts are unique up to a unique isomorphism when they exist. We write $\coprod\limits_{\lambda \in \Lambda} \bfX_{\lambda}$ for the coproduct of the family $\{ \bfX_{\lambda} \}_{\lambda \in \Lambda}$.

We give a few examples of subobject categories and coproducts now. Very often coproducts amount to taking `disjoint union' of structures, but as will see shortly they look different when working with topological and measure-theoretic factors, i.e. when we are taking coproducts inside respective categories of subobjects that we introduced above.

\subsubsection{Coproducts of sets, abelian groups, vector spaces}
Consider the category $\SET$ of sets with maps between sets being the morphisms. Then the coproduct of a family $\{ \bfX_{\lambda} \}_{\lambda \in \Lambda}$ of sets is the disjoint union $\bigsqcup\limits_{\lambda \in \Lambda} \bfX_{\lambda}$ of these sets, and maps $\pi_{\lambda}: \bfX_{\lambda} \to \bigsqcup\limits_{\lambda \in \Lambda} \bfX_{\lambda}$ are the canonical set inclusions. Verifying the universal property is straightforward. It is also not difficult to show that in the categories $\mathsf{Ab}$ of abelian groups and $\mathsf{Vect}_k$ of vector spaces over a fixed field $k$ coproduct is actually a direct sum.

\subsubsection{Products and coproducts of local sublattices} Let $(\bfV,\pi)$ be an abstract dynamical lattice and $\Suba{\bfV,\pi}$ be the category of its $\Gamma$-invariant local sublattices. Firstly, we have a look at the products in $\Suba{\bfV,\pi}$. Let $\bfX, \bfY$ be $\Gamma$-invariant local sublattices of $\bfV$. Let
\begin{equation*}
\bfZ:=\bigcap\{ \bfW: \bfW \text{ is a local invariant sublattice of } \bfV \text{ containing } \bfX,\bfY \}.
\end{equation*}

An intersection of any family of invariant local sublattices is local and invariant as well, and checking the universal property is straightforward. Hence $\bfZ = \bfX \prod \bfY$. Secondly, let $\{ \bfX_{\lambda} \}_{\lambda \in \Lambda}$ be a family of local invariant sublattices of $\bfV$. Let
\begin{equation*}
\bfW:=\bigcap\limits_{\lambda \in \Lambda} \bfX_{\lambda}.
\end{equation*}
Once again $\bfW$ is a local invariant sublattice and the universal property is also satisfied. Hence $\bfW = \coprod\limits_{\lambda \in \Lambda} \bfX_{\lambda}$.

\subsubsection{Topological factors and their coproducts}
\label{sss.coptop}
Let $\TOP$ be the category of compact Hausdorff topological spaces, introduced in Section \ref{ex.toplatint}. Let $\bfA \in \TOP$ be a fixed topological space, and let $\Suba{\bfA}$ be the associated category of subobjects. If $\bfX$ and $\bfY$ are factors of $\bfA$ s.t. the corresponding subtopologies on $\bfA$ coincide, then $\bfX$ equals $\bfY$ in $\Suba{\bfA}$. This shows that the functor $\uM_{\TOP}^0$ from Section \ref{ex.toplatint} induces a covariant embedding $\widetilde \uM_{\TOP}^0: \Suba{\bfA} \to \Suba{\overline \bfA}$\footnote{Here $\overline \bfA$ is the distributive lattice with localization introduced in Section \ref{ex.toplatint}.}. The range of this functor is a full subcategory of $\Suba{\overline \bfA}$, and, furthermore, this shows that $\Suba{\bfA}$ is a small category.

We describe a coproduct of two factors first\footnote{Even though we have described a category embedding above, where the codomain category admits products and coproducts, we still have to perform this check.}. So let $\phi: \bfA \to \bfX$ and $\psi: \bfA \to \bfY$ be representatives of $[\phi], [\psi] \in \Suba{\bfA}$. Consider the factor $\bfZ$ of $\bfA$ obtained by
\begin{aufziii}
\item lifting the topologies of $\bfX$ and $\bfY$ to subtopologies on $\bfA$ via maps $\phi$ and $\psi$;

\item taking the intersection of these subtopologies, obtaining a new compact topology on $\bfA$;

\item gluing points that are not separated by this topology to define a quotient map $\chi: \bfA \to \bfZ$.
\end{aufziii}
Then $\bfZ \in \TOP$ and $[\chi] \in \Suba{\bfA}$ is the coproduct of $[\psi]$ and $[\phi]$.  Similar construction applies to infinite coproducts.

\subsubsection{Measure-theoretic factors and their coproducts}
\label{sss.copmeas}
Let $\PSP$ be the category of standard probability spaces, introduced in Section \ref{ex.measlatint}. Let $\bfA \in \PSP$ be a fixed probability space, and let $\Suba{\bfA}$ be the associated category of subobjects. If $\bfX$ and $\bfY$ are factors of $\bfA$ s.t. the corresponding subalgebras on $\bfA$ coincide, then $\bfX$ equals $\bfY$ in $\Suba{\bfA}$. This shows that the functor $\uM_{\PSP}^0$ from Section \ref{ex.measlatint} induces a covariant embedding $\widetilde \uM_{\PSP}^0: \Suba{\bfA} \to \Suba{\overline \bfA}$\footnote{Here $\overline \bfA$ is the distributive lattice with localization introduced in Section \ref{ex.measlatint}.}. The range of this functor is a full subcategory of $\Suba{\overline \bfA}$ as well\footnote{Because of the duality between embeddings of separable measure algebras and morphisms of correpsonding standard probability spaces, see Theorem 2.15 in \cite{glasner}.}, and $\Suba{\bfA}$ is a small category, too.

We begin by describing a coproduct of two factors first. So let $\phi: \bfA \to \bfX$ and $\psi: \bfA \to \bfY$ be representatives of $[\phi], [\psi] \in \Suba{\bfA}$ with corresponding measure subalgebras $\overline \bfX$ and $\overline \bfY$ of the measure algebra $\overline \bfA$ of $\bfA$. Consider the factor $\bfZ$ of $\bfA$ obtained by
\begin{aufziii}
\item intersecting the measure subalgebras $\overline \bfX$ and $\overline \bfY$, obtaining a measure subalgebra of $\overline \bfA$;

\item using duality Theorem 2.15 in \cite{glasner} to get a factor map $\chi: \bfA \to \bfZ$ with corresponding measure algebra $\overline \bfX \cap \overline \bfY$.
\end{aufziii}
Then $\bfZ \in \PSP$ and $[\chi] \in \Suba{\bfA}$ is the coproduct of $[\psi]$ and $[\phi]$.  Similar construction applies to infinite coproducts.

\subsection{Measurement functors}
\label{ss.measfunctd}
Now we can introduce the notion of a measurement functor. Let $\uC$ be a category and let $\Rep{\Gamma}{\uC}$ be the associated category of representations. A covariant functor $\uM: \Rep{\Gamma}{\uC} \to \Rep{\Gamma}{\MDL}$ is called a \textbf{measurement functor} if it satisfies the following condition for every object $(\bfA,\pi) \in \ob(\Rep{\Gamma}{\uC})$:
\begin{quote}
If $\uM(\bfA,\pi) = (\overline \bfA, \overline \pi) \in \ob(\Rep{\Gamma}{\MDL})$, then for every cover $\alpha \in \Sigma_{\overline \bfA}$ the corresponding localization $\Omega(\alpha)$ within the lattice $\overline \bfA$ is precisely the minimal sublattice containing $\alpha$ that can be realized by the functor.
\end{quote}

We explain what these requirements mean. The objects of the category $\Rep{\Gamma}{\uC}$ are pairs $(\bfA, \pi)$, where $\bfA \in \ob(\uC)$ and $\pi: \Gamma \to \Aut(\bfA)$ is a group homomorphism. So, for every object $(\bfA, \pi) \in \ob(\Rep{\Gamma}{\uC})$ applying the functor $\uM$ yields a dynamical lattice
\begin{equation*}
\uM(\bfA, \pi) = (\overline{\bfA}, \overline{\pi}) \in \ob(\Rep{\Gamma}{\MDL}).
\end{equation*}
Here $\overline{\bfA} \in \ob(\MDL)$ is a measured lattice with localization, and $\overline{\pi}$ is a representation of $\Gamma$ in the group $\Aut(\overline{\bfA})$.

Morphisms in $\Hom((\bfA,\pi),(\bfB,\rho))$ are those morphisms $\psi$ in $\Hom(\bfA,\bfB)$ that satisfy $\psi \circ \pi_{\gamma} = \rho_{\gamma} \circ \psi$ for all $\gamma \in \Gamma$. For every such $\psi \in \Hom_{\Gamma}(\bfA,\bfB)$ the functor $\uM$ defines a $\Gamma$-equivariant morphism $\uM(\psi): \overline{\bfA} \to \overline{\bfB}$. This means that $\uM(\psi) \circ \overline{\pi}_{\gamma} = \overline{\rho}_{\gamma} \circ M(\psi)$ for every $\gamma \in \Gamma$, i.e. the diagram
\begin{equation*}
 \xymatrix{ \overline{\bfA} \ar[r]^{\uM(\psi)} & \overline{\bfB}  \\
               \overline \bfA \ar[u]^{\overline{\pi}_{\gamma}} \ar[r]_{\uM(\psi)} & \overline{\bfB} \ar[u]_{\overline{\rho}_{\gamma}} }
\end{equation*}
commutes.

Now we explain the main condition, for that we need to introduce some language first. Let $\uM$ be a covariant functor as above, $(\bfA,\pi) \in \ob(\Rep{\Gamma}{\uC})$, $\uM(\bfA,\pi)=(\overline \bfA,\overline \pi)$ and let $\alpha \in \Sigma_{\overline{\bfA}}$ be a cover. Then $\overline{\bfA}$ is a measured lattice with localization, and $\Sub{\alpha}{\overline{\bfA}, \overline{\pi}}$ is the associated category of $\Gamma$-invariant local sublattices of $\overline{\bfA}$ containing $\alpha$. Some of these sublattices are coming from factors of the system $(\bfA,\pi)$ via applying $\uM$ - for example $\overline{\bfA}$ itself - while others may not. To make this precise, consider the categories $\Suba{\overline{\bfA},\overline{\pi}}$, $\Suba{\bfA,\pi}$ of subobjects of $\overline \bfA$ and $\bfA$ respectively. Observe that the functor $\uM$ induces a functor $\widetilde \uM: \Suba{\bfA,\pi} \to \Suba{\overline{\bfA},\overline \pi}$ in the following manner. Given an arrow $\phi: \bfA \to \bfB$ in $\Rep{\Gamma}{\uC}$, it is mapped to the equivalence class of arrows $[\uM(\phi)]$ in $\Suba{\overline \bfA, \overline \pi}$, and this definition is independent of the representative of $\phi$ in $[\phi] \in \Suba{\bfA,\pi}$. In this definition we are only using that the functor $\uM$ preserves commutative diagrams, in particular, that it preserves isomorphisms. We define the subcategory $$\SSubam{\overline \bfA, \overline{\pi}}{\uM} \subseteq \Suba{\overline{\bfA},\overline{\pi}}$$ of \textbf{spacial sublattices} w.r.t. the functor $\uM$ as the range of $\widetilde \uM$. In general this subcategory is not full, i.e. there can be $\Gamma$-equivariant lattice embeddings that are not spacial. Since the category $\Suba{\overline \bfA,\overline \pi}$ is in fact the category of local $\Gamma$-invariant sublattices of $\overline \bfA$ (via the identification that was explained at the end of Section \ref{ss.dynlatdef}), we can talk about the subcategory of spacial sublattices $$\SSubm{\alpha}{\overline{\bfA},\overline{\pi}}{\uM} \subseteq \SSubam{\overline{\bfA},\overline{\pi}}{\uM}$$ containing the cover $\alpha$, which is a full subcategory of $\SSubam{\overline{\bfA},\overline{\pi}}{\uM}$.

This language allows us to explain what the main requirement says. Namely, for every object $(\bfA,\pi) \in \ob(\Rep{\Gamma}{\uC})$ we consider its image $\uM(\bfA,\pi) = (\overline \bfA,\overline \pi)$, where $\overline \bfA=(A,m,\Omega)$ is a measured lattice with localization, and require that for every cover $\alpha \in \Sigma_{\overline \bfA}$ the lattice $\Omega(\alpha)$ is the final object in the category $\SSubm{\alpha}{\overline \bfA, \overline \pi}{\uM}$.

Hence $\Omega(\alpha)$ is the spacial sublattice canonically embedded in any other spacial sublattice containing $\alpha$ via a spacial (i.e. corresponding to a factor in $\Rep{\Gamma}{\uC}$) morphism. In particular, this requirement shows that $\Omega(\alpha)$ does depend on the representation $\pi$, since it is $\Gamma$-invariant w.r.t. $\overline \pi$.

\begin{exas}
We will now provide some trivial examples of spacial and non-spacial sublattices. For simplicity we will take $\Gamma$ to be trivial and work in the category $\PSP$. Let $\bfX = (\prX,\calM,\mu)$ be the probability space on the set $\prX := \{ a, b, c\}$ with $\calM:=\Powa(\prX)$ being the boolean algebra of all subsets of $\prX$ and $\mu$ be the uniform probability distribution. We consider the measured lattice with localization $\overline \bfX$ associated to $\bfX$ that was given in Section \ref{ex.measlatint}\footnote{This corresponds to the measurement functor $\uM_{\PSP}$ for trivial $\Gamma$, see Section \ref{sss.measdynex} below for more details.}.
Consider a sublattice $V:=\{ 0, \{a,b \}, \{ b\}, \{ b,c\} , 1 \}$ of $\calM$. Then $V$ is not a spacial sublattice because it is not a boolean algebra. Conversely, any subalgebra of $\calM$ defines a factor of $\bfX$ in $\PSP$ and thus is an example of a spacial sublattice.
\end{exas}

Given a covariant functor $\uM: \Rep{\Gamma}{\uC} \to \Rep{\Gamma}{\MDL}$, the main condition in the definition of a measurement functor is rather nontrivial, and it is not clear if the final object exists in the category $\SSubm{\alpha}{\overline \bfA, \overline \pi}{\uM}$. When it exists, we know that it is unique. We now provide an abstract condition that simplifies the verification of the second requirement.
\begin{prop}[Coproduct stability condition]
\label{p.coprodstab}
Let $\uM: \Rep{\Gamma}{\uC} \to \Rep{\Gamma}{\MDL}$ be a covariant functor. Let $(\bfA,\pi) \in \Rep{\Gamma}{\uC}$, $(\overline \bfA,\overline \pi)=\uM(\bfA, \pi)$, and let $\alpha \in \Sigma_{\overline \bfA}$. Suppose the following holds:
\begin{aufziii}
\item the category $\Suba{\bfA,\pi}$ is small and admits infinite coproducts;

\item if $\{ \bfX_{\lambda} \}_{\lambda \in \Lambda}$ is a collection of objects in $\Suba{\bfA,\pi}$ such that for every index $\lambda \in \Lambda$ object $\widetilde \uM(\bfX_{\lambda})$ is in $\SSubm{\alpha}{\overline \bfA, \overline \pi}{\uM}$, then the image of the coproduct $\widetilde \uM(\coprod\limits_{\lambda \in \Lambda} \bfX_{\lambda})$ is in $\SSubm{\alpha}{\overline \bfA, \overline \pi}{\uM}$ as well.
\end{aufziii}

Then $\widetilde \uM(\coprod\limits_{\lambda \in \Lambda} \bfX_{\lambda})$ is the final object in category $\SSubm{\alpha}{\overline \bfA, \overline \pi}{\uM}$, where $\{ \bfX_{\lambda} \}_{\lambda \in \Lambda}$ is the collection of all objects in $\Suba{\bfA,\pi}$ s.t. $\widetilde \uM(\bfX_{\lambda})$ is in $\SSubm{\alpha}{\overline \bfA, \overline \pi}{\uM}$ for all indices $\lambda$.
\end{prop}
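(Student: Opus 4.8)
The plan is to work throughout in the identification, explained at the end of Section~\ref{ss.dynlatdef}, of $\Suba{\overline\bfA,\overline\pi}$ with the category of $\Gamma$-invariant local sublattices of $\overline\bfA$, in which a morphism $\bfX\to\bfY$ exists precisely when the sublattice $Y$ is contained in $X$ and in which there is \emph{at most one} morphism between any two objects. Under this dictionary a final object of $\SSubm{\alpha}{\overline\bfA,\overline\pi}{\uM}$ is a spacial sublattice containing $\alpha$ that is contained in every other such sublattice; equivalently, it is an object $\bfT$ of $\SSubm{\alpha}{\overline\bfA,\overline\pi}{\uM}$ admitting a (necessarily unique) morphism $\bfZ\to\bfT$ from every object $\bfZ$ of that category. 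I would establish the latter formulation, with $\bfT:=\widetilde\uM(\coprod_{\lambda}\bfX_\lambda)$.

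First I would check that the object is legitimate and lies in the right category. Since $\overline\bfA$ is the image under $\widetilde\uM$ of the subobject represented by $\id_\bfA$ and contains the cover $\alpha$, it belongs to $\SSubm{\alpha}{\overline\bfA,\overline\pi}{\uM}$, so the collection $\{\bfX_\lambda\}_{\lambda\in\Lambda}$ is nonempty. By hypothesis~(a) the category $\Suba{\bfA,\pi}$ is small, hence this collection is a genuine set and the coproduct $\bfC:=\coprod_{\lambda\in\Lambda}\bfX_\lambda$ exists in $\Suba{\bfA,\pi}$. Applying hypothesis~(b) to this full collection gives $\widetilde\uM(\bfC)\in\SSubm{\alpha}{\overline\bfA,\overline\pi}{\uM}$, so $\bfT=\widetilde\uM(\bfC)$ is at least an object of the category in which it is to be final.

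For finality, let $\bfZ$ be an arbitrary object of $\SSubm{\alpha}{\overline\bfA,\overline\pi}{\uM}$. By construction this is the full subcategory of the range $\SSubam{\overline\bfA,\overline\pi}{\uM}$ of $\widetilde\uM$ on those objects that contain $\alpha$, so $\bfZ=\widetilde\uM(\bfX)$ for some $\bfX\in\Suba{\bfA,\pi}$ with $\widetilde\uM(\bfX)\in\SSubm{\alpha}{\overline\bfA,\overline\pi}{\uM}$; thus $\bfX$ is one of the indexed objects, say $\bfX=\bfX_\mu$. The coproduct supplies a coprojection $\iota_\mu\colon\bfX_\mu\to\bfC$ in $\Suba{\bfA,\pi}$, and functoriality of $\widetilde\uM$ produces an arrow $\widetilde\uM(\iota_\mu)\colon\bfZ=\widetilde\uM(\bfX_\mu)\to\widetilde\uM(\bfC)=\bfT$. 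Being the $\widetilde\uM$-image of a morphism, this arrow lies in $\SSubam{\overline\bfA,\overline\pi}{\uM}$; and since its source and target both contain $\alpha$ and $\SSubm{\alpha}{\overline\bfA,\overline\pi}{\uM}$ is full in $\SSubam{\overline\bfA,\overline\pi}{\uM}$, it is a morphism $\bfZ\to\bfT$ inside $\SSubm{\alpha}{\overline\bfA,\overline\pi}{\uM}$. Uniqueness is then automatic, because there is at most one morphism between any two objects of $\Suba{\overline\bfA,\overline\pi}$, hence in any subcategory. This exhibits $\bfT$ as the final object.

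The step I expect to be the genuine obstacle is precisely the verification that $\widetilde\uM(\iota_\mu)$ lands inside $\SSubm{\alpha}{\overline\bfA,\overline\pi}{\uM}$. Because $\SSubam{\overline\bfA,\overline\pi}{\uM}$ is merely the range of $\widetilde\uM$ and need not be full, one cannot argue abstractly that a morphism between two of its objects exists; instead one must exhibit it concretely as the image of an honest arrow in $\Suba{\bfA,\pi}$ — here the coprojection $\iota_\mu$ — and only afterwards invoke fullness of $\SSubm{\alpha}{\overline\bfA,\overline\pi}{\uM}$ within $\SSubam{\overline\bfA,\overline\pi}{\uM}$ together with the fact that both endpoints contain $\alpha$. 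A secondary point demanding care is the bookkeeping of arrow directions: coproducts in $\Suba{\overline\bfA,\overline\pi}$ correspond to intersections of sublattices, so one must confirm that the arrow produced genuinely points $\bfZ\to\bfT$, as finality requires, rather than in the opposite direction.
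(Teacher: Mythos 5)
Your proof is correct and follows essentially the same route as the paper's: verify via hypothesis (b) that $\widetilde\uM(\coprod_{\lambda}\bfX_{\lambda})$ lies in $\SSubm{\alpha}{\overline\bfA,\overline\pi}{\uM}$, then obtain the required morphism from any $\bfZ=\widetilde\uM(\bfX_\mu)$ by applying $\widetilde\uM$ to the coprojection $\bfX_\mu\to\coprod_{\lambda}\bfX_{\lambda}$. The extra observations you add (nonemptiness of the indexing collection, uniqueness of the arrow, the direction bookkeeping) are sound refinements of the same argument.
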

\begin{proof}
The second condition implies that $\widetilde \uM(\coprod\limits_{\lambda \in \Lambda} \bfX_{\lambda})$ is indeed in $\SSubm{\alpha}{\overline \bfA, \overline \pi}{\uM}$, i.e. it is a $\Gamma$-invariant spacial local sublattice of $\overline \bfA$, containing $\alpha$.

To show that $\widetilde \uM(\coprod\limits_{\lambda \in \Lambda} \bfX_{\lambda})$ is the final object in $\SSubm{\alpha}{\overline \bfA, \overline \pi}{\uM}$, we need to show that for every $\bfZ \in \SSubm{\alpha}{\overline \bfA, \overline \pi}{\uM}$ there is a spacial morphism $\psi: \bfZ \to \widetilde \uM(\coprod\limits_{\lambda \in \Lambda} \bfX_{\lambda})$. By the definition of the category of spacial sublattices, we have $\bfZ  = \widetilde \uM(\bfX_0)$ for some factor $\bfX_0$ of $\bfA$. Then $\bfX_0$ belongs to the collection of subobjects $\{ \bfX_{\lambda}\}_{\lambda \in \Lambda}$ from the statement of the theorem, and so there is a morphism $\psi_0: \bfX_0 \to \coprod\limits_{\lambda \in \Lambda} \bfX_{\lambda}$ in category $\Suba{\bfA,\pi}$. Then $\widetilde \uM(\psi_0): \bfZ \to \widetilde \uM(\coprod\limits_{\lambda \in \Lambda} \bfX_{\lambda})$ is the spacial morphism as required.

In the language of lattices this implies that $\widetilde \uM(\coprod\limits_{\lambda \in \Lambda} \bfX_{\lambda})$ is a spacial sublattice containing $\alpha$, and it is embedded in any other spacial sublattice with this property.
\end{proof}

This proposition has the following corollary, whose proof is straightforward.
\begin{cor}
\label{cor.coprodcond}
Let $\uM: \Rep{\Gamma}{\uC} \to \Rep{\Gamma}{\MDL}$ be a covariant functor satisfying the conditions of Proposition \ref{p.coprodstab}. Then $\uM$ is a measurement functor if and only if for every $(\bfA,\pi) \in \ob(\Rep{\Gamma}{\uC})$ with the corresponding abstract dynamical lattice $(\overline \bfA, \overline \pi)=\uM(\bfA,\pi)$ and the localization function $\Omega$ we have for every cover $\alpha \in \Sigma_{\overline \bfA}$ that $\Omega(\alpha) = \widetilde \uM(\coprod\limits_{\lambda \in \Lambda} \bfX_{\lambda})$, where $\{ \bfX_{\lambda} \}_{\lambda \in \Lambda}$ is the collection of all objects in $\Suba{\bfA,\pi}$ s.t. $\widetilde \uM(\bfX_{\lambda})$ is in $\SSubm{\alpha}{\overline \bfA, \overline \pi}{\uM}$ for all indices $\lambda$.
\end{cor}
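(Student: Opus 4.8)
The plan is to observe that Proposition \ref{p.coprodstab} has already done all the genuine work: under the standing hypotheses it identifies the final object of the category $\SSubm{\alpha}{\overline\bfA,\overline\pi}{\uM}$ with the specific spacial sublattice $\widetilde\uM(\coprod_{\lambda\in\Lambda}\bfX_\lambda)$, where $\{\bfX_\lambda\}_{\lambda\in\Lambda}$ ranges over all subobjects of $(\bfA,\pi)$ for which $\widetilde\uM(\bfX_\lambda)$ lies in $\SSubm{\alpha}{\overline\bfA,\overline\pi}{\uM}$. The only remaining task is to compare this description of the final object with the defining clause of a measurement functor, which demands precisely that $\Omega(\alpha)$ be that final object. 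So the corollary reduces to a uniqueness-of-final-object bookkeeping argument, carried out separately for each implication.

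First I would unwind the definition of a measurement functor: $\uM$ is one if and only if, for every $(\bfA,\pi)\in\ob(\Rep{\Gamma}{\uC})$ with $\uM(\bfA,\pi)=(\overline\bfA,\overline\pi)$ and every cover $\alpha\in\Sigma_{\overline\bfA}$, the localization $\Omega(\alpha)$ is the final object of $\SSubm{\alpha}{\overline\bfA,\overline\pi}{\uM}$. For the forward implication I would assume $\uM$ is a measurement functor, fix an arbitrary object and cover, and apply Proposition \ref{p.coprodstab} to conclude that $\widetilde\uM(\coprod_{\lambda\in\Lambda}\bfX_\lambda)$ is also a final object of the very same category. Since $\Suba{\overline\bfA,\overline\pi}$ is (under the identification at the end of Section \ref{ss.dynlatdef}) the poset of $\Gamma$-invariant local sublattices of $\overline\bfA$, its Hom-sets contain at most one arrow, and the same holds for the full subcategory $\SSubm{\alpha}{\overline\bfA,\overline\pi}{\uM}$; hence its final object, when it exists, is genuinely unique, not merely unique up to isomorphism. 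Consequently the two descriptions of the final object coincide and $\Omega(\alpha)=\widetilde\uM(\coprod_{\lambda\in\Lambda}\bfX_\lambda)$, as required.

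For the converse I would assume the stated equality holds for every object and cover, again fix an arbitrary $(\bfA,\pi)$ and $\alpha$, and invoke Proposition \ref{p.coprodstab} to see that the right-hand side $\widetilde\uM(\coprod_{\lambda\in\Lambda}\bfX_\lambda)$ is the final object of $\SSubm{\alpha}{\overline\bfA,\overline\pi}{\uM}$. Substituting the hypothesis then shows that $\Omega(\alpha)$ itself is that final object, which is exactly the main condition in the definition of a measurement functor; as the object and cover were arbitrary, $\uM$ is a measurement functor.

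The argument is essentially formal, so there is no serious obstacle. The only point worth flagging is the appeal to uniqueness of the final object in the forward direction: one must use that $\SSubm{\alpha}{\overline\bfA,\overline\pi}{\uM}$ inherits the poset structure of the ambient subobject category, so that ``being a final object'' pins down an actual local sublattice rather than an isomorphism class. Without this observation the conclusion would only be an equality up to isomorphism in $\Rep{\Gamma}{\MDL}$, which is strictly weaker than the on-the-nose identity $\Omega(\alpha)=\widetilde\uM(\coprod_{\lambda\in\Lambda}\bfX_\lambda)$ asserted in the corollary.
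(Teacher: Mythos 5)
Your argument is correct and is exactly the ``straightforward'' proof the paper alludes to (the paper gives no explicit proof, only the remark that it is immediate from Proposition \ref{p.coprodstab}): both directions reduce to identifying $\Omega(\alpha)$ with the final object of $\SSubm{\alpha}{\overline\bfA,\overline\pi}{\uM}$, which Proposition \ref{p.coprodstab} computes as $\widetilde\uM(\coprod_{\lambda\in\Lambda}\bfX_\lambda)$. Your flag about on-the-nose uniqueness of the final object is well taken and is consistent with the paper's own observation that in these sublattice categories Hom-sets have at most one arrow, so ``there can be only one final object.''
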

We now return to the examples that we considered in Sections \ref{ex.toplatint}, \ref{ex.measlatint} and later in Sections \ref{sss.coptop}, \ref{sss.copmeas}.

\subsubsection{Topological dynamics}
\label{sss.topdynex}
Consider the category $\TOP$ of compact Hausdorff spaces with surjective continuous maps as morphisms and the associated category $\Rep{\Gamma}{\TOP}$, which is just the category of topological dynamical systems. Objects of $\Rep{\Gamma}{\TOP}$ are pairs $(\bfA, \pi)$, where $\bfA$ is a compact topological space and $\pi: \Gamma \to \Homeo(\bfA)$ is a homomorphism from $\Gamma$ to the group of homeomorphisms of $\bfA$. We will describe the construction of a measurement functor $\uM_{\TOP}: \Rep{\Gamma}{\TOP} \to \Rep{\Gamma}{\MDL}$ that eventually leads to the definition of topological entropy.

Given $(\bfA, \pi)$ as above, $\bfA$ is a nonempty topological space $\bfA=(\uK,\calU)$ with a compact topology $\calU$ on a set $\uK$, define $\uM_{\TOP}(\bfA,\pi):=(\calU,m,\Omega; \overline{\pi})$. Here $\calU$ is the distributive lattice of open sets with $0$ being the empty set and $1$ being the whole space $\uK$, $m$ takes value $1$ on all $a \in \calU, a\neq 0$ and $m(0)=0$. A finite cover $\alpha \in \Sigma_{\calU}$ is then simply a finite cover of $\uK$ by open sets. We define the group homomorphism $\overline{\pi}$ by
\begin{equation*}
\overline{\pi}(\gamma): U \mapsto \pi_{\gamma}^{-1} U, \ \ U \in \calU, \gamma \in \Gamma.
\end{equation*}

For $\Omega(\alpha)$ we take the minimal $\Gamma$-invariant topology $\calV \subseteq \calU$ on $\uK$ that contains the family of open sets $\alpha$. Note that this topology is compact and has a basis of open sets
\begin{align*}
\{ \overline \pi_{\gamma_1} a_1 \cap \overline \pi_{\gamma_2} a_2 \cap \dots \cap \overline \pi_{\gamma_n} a_n \},
\end{align*}
where $n$ runs through $\N$, $\gamma_1,\dots,\gamma_n$ run over $\Gamma$, $a_1,\dots,a_n$ run over $ \alpha$. This topology is in general not Hausdorff. We describe the action of $\uM_{\TOP}$ on morphisms. Let $(\bfA,\pi)$  and $(\bfB,\rho)$ in $\Rep{\Gamma}{\TOP}$ be topological dynamical systems with $\bfA=(\uK,\calU), \bfB=(\uL,\calV)$ s.t. $$\uM_{\TOP}(\bfA,\pi)=(\calU,m_A,\Omega_A; \overline{\pi}), \ \ \uM_{\TOP}(\bfB,\rho)=(\calV,m_B,\Omega_B; \overline{\rho}).$$ Let  $\phi: \bfA \to \bfB$ be a $\Gamma$-equivariant morphism, that is a surjective continuous map $\phi: \uK \to \uL$ commuting with the action of $\Gamma$, then we define morphism $\uM_{\TOP}(\phi)$ as follows. Consider distributive lattice embedding $\Phi: \calV \to \calU$, $a \mapsto \phi^{-1} a$ for $a \in \calV$. Then by the surjectivity of $\phi$ set $\Phi(a)$ is nonempty if and only if $a$ is nonempty, thus $m_A(\Phi(a))=m_B(a)$. Let $\alpha \subseteq \calV$ be an open cover, then $\Omega_B(\alpha)$ is minimal $\Gamma$-invariant topology $\calV' \subseteq \calV$ on $\uL$ that contains family of open sets $\alpha$, and we have $\Phi(\Omega_B(\alpha)) = \phi^{-1} \calV' \subseteq \calU$. The topology $\phi^{-1} \calV'$  has basis
\begin{align*}
\{ \Phi(\overline \rho_{\gamma_1} a_1 \cap \overline \rho_{\gamma_2} a_2 \cap \dots \cap \overline \rho_{\gamma_n} a_n): &n \in \N, \gamma_1,\dots,\gamma_n \in \Gamma, \\
&a_1,\dots,a_n \in \alpha \}.
\end{align*}
Similarly, $\Omega_A(\Phi(\alpha))$ is the topology with basis sets of the form $$\overline \pi_{\gamma_1} (\Phi a_1) \cap \overline \pi_{\gamma_2} (\Phi a_2) \cap \dots \cap \overline \pi_{\gamma_n} (\Phi a_n).$$ The map $\phi$ is $\Gamma$-equivariant, so
\begin{align*}
&\overline \pi_{\gamma_1} (\Phi a_1) \cap \overline \pi_{\gamma_2} (\Phi a_2) \cap \dots \cap \overline \pi_{\gamma_n} (\Phi a_n) =\\
&=\Phi(\overline \rho_{\gamma_1} a_1 \cap \overline \rho_{\gamma_2} a_2 \cap \dots \cap \overline \rho_{\gamma_n} a_n)
\end{align*}
and the topologies coincide. This shows that $\Phi^{\op} \in \Hom_{\Gamma}((\bfA,\pi),(\bfB,\rho))$.

The fact that $\uM_{\TOP}(\phi \circ \psi) = \uM_{\TOP}(\phi) \circ \uM_{\TOP}(\psi)$ is also easily verified. Hence $\uM_{\TOP}$ is a well-defined covariant functor.

We explain briefly how one can use Proposition \ref{p.coprodstab} to verify the main condition in the definition of a measurement functor. It relies on the observations in Section \ref{sss.copmeas}, so first of all $\Suba{\bfA,\pi}$ is a small category. Secondly, if one follows all the steps in the construction of the coproduct of two factors, one notes that the resulting subtopology contains a given cover $\alpha$ if both factors do. Thus we can apply Proposition \ref{p.coprodstab} and Corollary \ref{cor.coprodcond}, since $\Omega(\alpha)$ defined above coincides with $\widetilde \uM(\coprod\limits_{\lambda \in \Lambda} \bfX_{\lambda})$ from the proposition.

\subsubsection{Measure-preserving dynamics}
\label{sss.measdynex}
Let us return to the category $\PSP$ of standard probability spaces with equivalence classes of measure-preserving maps as morphisms and the associated category $\Rep{\Gamma}{\PSP}$. In what follows, a measurement functor $\uM_{\PSP}: \Rep{\Gamma}{\PSP} \to \Rep{\Gamma}{\MDL}$, that can be used to define Kolmogorov-Sinai entropy, is constructed. Given probability-preserving dynamical system $(\bfA, \pi)$, where the standard probability space $\bfA=(\uK,\calA,\mu)$ has measure algebra $\calA$ and probability measure $\mu$, define $\uM_{\PSP}(\bfA,\pi):=(\calA,m,\Omega; \overline{\pi})$. Here $\calA$ carries the structure of distributive lattice with $0$ being the empty set and $1$ being the whole space $\uK$, and $m(a):=\mu(a)$ for all $a \in \calA$.
We define group homomorphism $\overline{\pi}$ by $\overline{\pi}(\gamma): U \mapsto \pi_{\gamma}^{-1} U$ for $U \in \calA, \gamma \in \Gamma$.
A finite cover $\alpha \in \Sigma_{\calA}$ is a finite cover of $\uK$ by measurable sets modulo null sets, so for $\Omega_A(\alpha)$ we take the minimal $\Gamma$-invariant measure subalgebra of $\calA$ on $\uK$ that contains the family of measurable sets $\alpha$. Note that this subalgebra equals $\sigma(\calA_0)$, for $\calA_0$ being the algebra of unions of sets of the form
$$\bigcap\limits_{i=1}^n \overline \pi_{\gamma_i} a_i \cap \bigcap\limits_{i=n+1}^{n+m} \overline \pi_{\gamma_i} a_i^c,$$ where $n, m \in \N, \gamma_1,\dots,\gamma_{n+m} \in \Gamma, a_1,\dots,a_{n+m} \in \alpha$.

Let $(\bfA,\pi), (\bfB,\rho) \in \Rep{\Gamma}{\PSP}$ be probabilistic dynamical systems s.t. $$\uM_{\PSP}(\bfA,\pi)=(\calA,m_A,\Omega_A; \overline{\pi}), \ \  \uM_{\PSP}(\bfB,\rho)=(\calB,m_B,\Omega_B; \overline{\rho}).$$ Let  $\phi: \bfA \to \bfB$ be a $\Gamma$-equivariant morphism, then we define the morphism $\uM_{\PSP}(\phi)$ as follows. Consider the measure algebra embedding $\Phi: \calB \to \calA$, $a \mapsto \phi^{-1} a$ for $a \in \calB$. Then $m_A(\Phi(a))=m_B(a)$, since $\phi$ is measure-preserving. For a finite cover $\alpha \in \Sigma_{\calB}$ the sigma-algebra $\Phi \Omega_B(\alpha)$ equals $\Phi \sigma(\calB_0)$. Here $\calB_0$ is the algebra of unions of sets of the form $\bigcap\limits_{i=1}^n \overline \pi_{\gamma_i} a_i \cap \bigcap\limits_{i=n+1}^{n+m} \overline \pi_{\gamma_i} a_i^c$, where $n, m \in \N, \gamma_1,\dots,\gamma_{n+m} \in \Gamma, a_1,\dots,a_{n+m} \in \alpha$. But $\sigma(\Phi \calB_0) = \Phi \sigma(\calB_0)$; so $\Phi \sigma(\calB_0)$ coincides with the sigma-algebra generated by the algebra $\Phi \calB_0$, which is just $\Omega_A(\Phi \alpha)$.

It is also easy to see that the functor $\uM_{\PSP}$ respects composition of morphisms, thus it is a covariant functor. The main condition in the definition of a measurement functor follows from a similar application of Proposition \ref{p.coprodstab} and Corollary \ref{cor.coprodcond}, which we can apply because of the stability of the coproduct construction in Section \ref{sss.copmeas}.

\section{Entropy of representations on abstract categories}
\label{s.entropy}

\subsection{Preliminaries on amenable groups} \label{ss.amenprelim}
A sequence of finite subsets $( F_n )_{n \in \N}$ of $\Gamma$ is called a \textbf{F{\o}lner sequence} if for every $g \in \Gamma$
\begin{equation}
\label{eq.folnerseq}
\lim\limits_{n \to \infty} \frac {|F_n \cap g F_n|} {|F_n|} = 1.
\end{equation}

It is easy to see that a sequence $( F_n )_n$ of finite subsets of $\Gamma$ is a F{\o}lner sequence if and only if for every $g \in \Gamma$ one has $$\lim\limits_{n \to \infty} \frac {|F_n \sdif g F_n|} {|F_n|} = 0.$$ A countable, infinite group $\Gamma$ is called \textbf{amenable} if it admits a F{\o}lner sequence.

In the definition of entropy we shall use the following important lemma, whose proof can be found in \cite{gromovo} or \cite{krieger}. We remind the reader that $\FPow(\Gamma)$ denotes the set of all finite subsets of the group $\Gamma$.
\begin{prop}[Ornstein-Weiss lemma]
\label{p.ornsteinweiss}
Let $f: \FPow(\Gamma) \to \Rp$ be a function from the set of finite subsets of an amenable group $\Gamma$ to the set of non-negative reals satisfying the following conditions

\begin{aufzi}
\item $f$ is monotone, i.e. $f(F_1) \leq f(F_2)$ holds for any two finite subsets $F_1 \subseteq F_2 \subsetneq \Gamma$;

\item $f$ is subadditive, i.e. $f(F_1 \cup F_2) \leq f(F_1) + f(F_2)$ holds for any two finite subsets $F_1,F_2 \subsetneq \Gamma$;

\item $f$ is right-invariant, i.e. $f(F g) = f(F)$ holds for all finite $F \subsetneq \Gamma$ and $g \in \Gamma$.
\end{aufzi}
Then for every F{\o}lner sequence $( F_n )_{n \in \N}$ of $\Gamma$ the limit
\begin{equation}
\label{eq.ornsteinweiss}
\lim\limits_{n\to \infty} \frac {f(F_n)} {|F_n|}
\end{equation}
in $\Rp$ exists and is independent of the choice of F{\o}lner sequence.
\end{prop}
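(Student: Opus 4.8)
The plan is to show that along any F{\o}lner sequence $(F_n)_n$ the ratios $f(F_n)/|F_n|$ converge, and that the limit is the same for all such sequences, by running a quasi-tiling argument in the spirit of Ornstein and Weiss. First I record two elementary consequences of the hypotheses. Writing $e$ for the identity of $\Gamma$ and using $\{x\}=\{e\}x$, right-invariance gives $f(\{x\})=f(\{e\})$, so subadditivity yields $f(F)\le\sum_{x\in F}f(\{x\})=|F|\,f(\{e\})$ for every finite $F$; hence $0\le f(F)/|F|\le f(\{e\})<\infty$ for all nonempty $F$, and in particular $L:=\liminf_n f(F_n)/|F_n|$ is a finite nonnegative number. (The left-hand F{\o}lner condition as stated and the right-invariance of $f$ are matched by passing to inverse sets, which sends left-F{\o}lner sequences to right-F{\o}lner ones; below ``translate'' always means the right translate that keeps $f$ unchanged.)

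The core claim is that $\limsup_n f(F_n)/|F_n|\le L$, which already forces convergence to $L$. Fix $\epsilon>0$. Because $L$ is the liminf, I may choose finitely many members $T_1,\dots,T_k$ of the sequence whose indices are large enough that each $T_i$ is as invariant as the quasi-tiling theorem requires and at the same time $f(T_i)/|T_i|<L+\epsilon$. The Ornstein--Weiss quasi-tiling theorem then supplies, for every sufficiently invariant finite set and in particular for $F_n$ with $n$ large, a family of pairwise disjoint translates of the $T_i$, all contained in $F_n$, covering a subset $F_n'\subseteq F_n$ with $|F_n'|\ge(1-\epsilon)|F_n|$. Using right-invariance $f(T_ig)=f(T_i)$ on each tile and subadditivity across the tiles together with the remainder $F_n\setminus F_n'$, I obtain
\[
 f(F_n)\;\le\;\sum_{\text{tiles }T_ig} f(T_i)\;+\;f(F_n\setminus F_n')
 \;\le\;(L+\epsilon)\sum_i N_i|T_i|\;+\;|F_n\setminus F_n'|\,f(\{e\}),
\]
where $N_i$ is the number of tiles of shape $i$. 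Since the tiles are disjoint and lie in $F_n$ we have $\sum_i N_i|T_i|=|F_n'|\le|F_n|$, while $|F_n\setminus F_n'|\le\epsilon|F_n|$; dividing by $|F_n|$ gives $f(F_n)/|F_n|\le(L+\epsilon)+\epsilon f(\{e\})$. Letting $n\to\infty$ and then $\epsilon\to0$ yields $\limsup_n f(F_n)/|F_n|\le L$, so the limit exists and equals $L$.

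Independence of the F{\o}lner sequence I would then deduce from this existence statement by interleaving. If $(F_n)_n$ and $(F_n')_n$ are two F{\o}lner sequences, the interleaved sequence $F_1,F_1',F_2,F_2',\dots$ is immediately checked to satisfy the F{\o}lner condition as well; by the previous paragraph $f(\cdot)/|\cdot|$ converges along it, and a convergent sequence possessing two subsequences forces the two original limits to coincide. Thus the common value is intrinsic to $(\Gamma,f)$, which completes the proof.

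The main obstacle is entirely the quasi-tiling input. In a general amenable group one cannot tile a F{\o}lner set by translates of a single shape, so one is forced to work with a finite family of tiles and to accept covering only a $(1-\epsilon)$-fraction; moreover the translates produced by the construction are typically only $\epsilon$-disjoint, and one must first thin them to a genuinely disjoint subfamily (at a small cost in coverage) before subadditivity can be applied tile by tile. The delicate point is to choose the tiles $T_i$ so that they are simultaneously near-optimal for the liminf and invariant enough for the quasi-tiling hypotheses, and to keep the density error and the invariance error under joint control as $n\to\infty$; everything else is the counting estimate displayed above. I would therefore isolate the Ornstein--Weiss quasi-tiling theorem as a separate lemma (or cite it from \cite{gromovo} or \cite{krieger}) and reduce the proposition to the elementary bookkeeping with subadditivity and right-invariance.
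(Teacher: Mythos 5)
The paper does not prove this proposition at all: it is imported as a known result, with the proof delegated to the cited references (Gromov's and Krieger's expositions of the Ornstein--Weiss lemma). So there is no ``paper proof'' to match; what you have written is an outline of exactly the argument those references give. Your elementary reductions are correct: $f(F)\le |F|\,f(\{e\})$ follows from right-invariance plus subadditivity, the bound $\limsup\le\liminf$ along a fixed F{\o}lner sequence is the real content, and the interleaving trick cleanly upgrades existence along each sequence to independence of the sequence. The counting estimate in the display is also right, provided the tiles really are pairwise disjoint and genuinely contained in $F_n$. The honest caveat is that, as a proof, your text is a reduction rather than a demonstration: every nontrivial difficulty is concentrated in the quasi-tiling theorem, which you black-box, and in the two bookkeeping issues you flag yourself --- thinning $\epsilon$-disjoint translates to a disjoint subfamily without losing too much density, and reconciling the left-handed F{\o}lner condition \eqref{eq.folnerseq} with the right-invariance of $f$ (the inversion trick works, but you should check that $\tilde f(F):=f(F^{-1})$ inherits monotonicity, subadditivity and left-invariance, and that $(F_n^{-1})_n$ is F{\o}lner for the opposite side, rather than waving at it parenthetically). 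One further small point: when you select the tiles $T_1,\dots,T_k$ you need them not merely individually invariant but forming a tower in which each $T_{i+1}$ is sufficiently invariant \emph{relative to} $T_{i}$, which you can arrange by spacing the chosen indices far apart; as written this requirement is not visible. If you state the quasi-tiling theorem precisely as a lemma and cite it, as you propose, the rest of your argument closes correctly --- which is essentially the same division of labour the paper itself adopts by citing the result wholesale.
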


\subsection{Entropy of abstract dynamical lattices} \label{ss.entdynlat}
In this subsection we let $$\bfV=(V,m,\Omega; \pi)$$ be an arbitrary abstract dynamical lattice in $\Rep{\Gamma}{\MDL}$ with a representation $\pi$ of a discrete amenable group $\Gamma$. Since morphisms in $\MDL$ are defined as opposites of the corresponding lattice embeddings (Section \ref{ss.dynlatdef}), the representation $\pi$ of $\Gamma$ determines (canonically) a left action of $\Gamma^{\op}$ on $V$ by lattice embeddings. Also, the representation $\pi$ is fixed throughout the largest part of this subsection, hence we suppress $\pi$ in the notation when possible and write $g x, g \in \Gamma, x \in V$ for the action of $\Gamma^{\op}$. In particular, we have the identity $(fg)x = g(fx)$ for all $f,g \in \Gamma$ and $x \in V$. For an arbitrary $\Gamma$-invariant sublattice  $W$ of $V$ we have $\Sigma_W \subseteq \Sigma_V$.

Since $V$ is a distributive lattice, it is equipped with a partial ordering relation $\leq$. To describe how covers are related to each other we need an ordering of covers as well. We cannot get a useful partial ordering relation on the set of covers in general, but there is a `natural' relation that is \emph{not} antisymmetric. Namely, the set $\Sigma_V$ is equipped with a quasiorder relation
\begin{equation*}
\succeq: \beta \succeq \alpha \ \Longleftrightarrow \ \forall \ b \in \beta \ \ \exists \ a \in \alpha \ \text{ s.t.  } a \geq b.
\end{equation*}
This coincides with the definition from the theory of topological entropy, where an open cover $\calU$ is said to be finer than an open cover $\calV$ if for every open set $A \in \calU$ there is an open set $B \in \calV$ s.t. $A \subseteq B$. If $\alpha \succeq \beta$, we say that $\alpha$ is \textbf{finer} than $\beta$, or that $\alpha$ \textbf{refines} $\beta$.

Given covers $\alpha, \beta \in \Sigma_W$ we define the \textbf{join} of these covers by
\begin{equation*}
\alpha \Sup \beta:=\{ a \Inf b: a\in \alpha, b \in \beta \} \in \Sigma_W.
\end{equation*}
The binary operation $\Sup$ is associative and commutative, hence we can also talk about joins of finite sets of covers. Similar to the remark made above, we cannot call a join of covers a supremum, but it is easy to see that $\gamma \succeq (\alpha \Sup \beta)$ if and only if $\gamma \succeq \alpha$ and $\gamma \succeq \beta$\footnote{It follows from this remark that $\alpha \Sup \beta$ is in fact a product of $\alpha$ and $\beta$ in a suitably defined `quasi-order category' $(\Sigma_V, \succeq)$.}. Furthermore,
\begin{equation}
\label{eq.covvmonot}
\alpha \Sup \beta \succeq \alpha \ \ \forall \alpha, \beta \in \Sigma_V.
\end{equation}

For an element $f \in \Gamma$ and a cover $\alpha \in \Sigma_W$ define $f \alpha:=\{ f x: x \in \alpha\} \in \Sigma_W$. This yields an action of  $\Gamma^{\op}$ on the set of covers $\Sigma_W$, hence for all $f,g \in \Gamma$ we have $(fg) \alpha = g(f \alpha)$. Given a finite subset $F \subset \Gamma$ define $\alpha^F:= \bigvee_{f \in F} f \alpha \in \Sigma_W$. It is clear that the action of $\Gamma^{\op}$ on $\Sigma_W$ preserves the preorder relation $\succeq$. Furthermore, $\beta \succeq g^{-1} \alpha$ if and only if $g \beta \succeq \alpha$. Also, for all covers $\alpha, \beta \in \Sigma_W$ and $g \in \Gamma$
\begin{align*}
g \alpha \Sup g \beta &= \{ g a \Inf g b: a \in \alpha, b \in \beta\} = \\
&=g \{ a \Inf b: a \in \alpha, b \in \beta\} = g(\alpha \Sup \beta).
\end{align*}

Let $\alpha \in \Sigma_V$, and let $W \in \SLPowg{\alpha}{V}$ be a $\Gamma$-invariant sublattice containing $\alpha$. Our goal now is to define the entropy of a cover $\alpha$ relative to the sublattice $W$. Define the \textbf{total mass} of a cover $\alpha$ by
\begin{equation*}
S(\alpha):=\sum\limits_{a \in \alpha} m(a).
\end{equation*}
Since $\alpha$ is a cover, there exists an element $a \in \alpha$ with $m(a)>0$, hence $S(\alpha)$ is always strictly positive. Define
\begin{equation}
\label{eq.hstar}
h^*(\alpha):=-\sum\limits_{a \in \alpha} \frac {m(a)} {S(\alpha)} \log \frac {m(a)} {S(\alpha)}.
\end{equation}
By a standard convention, we assume that $0 \cdot \log 0 = 0$. Then $h^*(\alpha)$ is always a nonnegative real number.

For a cover $\alpha \in \Sigma_V$ we denote the number of nonzero elements of $\alpha$ by
\begin{equation*}
N(\alpha):=\left| \{ a \in \alpha: m(a) \neq 0\} \right|.
\end{equation*}
Then $N(\alpha) \geq 1$ and $N(g \alpha) = N(\alpha)$ for every $g \in \Gamma$. Furthermore, $h^*(\alpha) \leq \log N(\alpha)$. Given a cover $\alpha \in \Sigma_V$ and an invariant sublattice $W \in \SLPowg{\alpha}{V}$ as above, we define
\begin{equation}
\label{eq.hhat}
\hat{h}_{W}(\alpha):= \sup\{ h^*(\beta): \beta \in \Sigma_{W} \text{ s.t. } \beta \succeq \alpha, N(\beta) \leq N(\alpha)
 \}.
\end{equation}
Of course, $$h^*(\alpha) \leq \hat{h}_{W}(\alpha) \leq \log N(\alpha).$$

Finally, the function $h_{\cdot}(\cdot)$ with values in $\Rp$ can be introduced. It will be used together with the Ornstein-Weiss Lemma to define the entropy of a dynamical lattice $((V,m,\Omega),\pi)$ in Proposition \ref{p.entropdef}. Given a cover $\alpha \in \Sigma_V$ and an invariant sublattice $W \in \SLPowg{\alpha}{V}$ as above, let
\begin{equation}
h_{W}(\alpha):=\inf\{ \sum\limits_{j=1}^n \hat h_W(\beta_j): n \in \N, \ \beta_1, \dots, \beta_n \in \Sigma_W \text{ s.t. } \bigvee\limits_{j=1}^n \beta_j \succeq \alpha\}.
\end{equation}
Since $\alpha \in \Sigma_W$, $$0 \leq h_{W}(\alpha) \leq \hat{h}_{W}(\alpha) \leq \log N(\alpha),$$ thus $h_{W}(\alpha)$ is always a nonnegative real number. Hence we can define a function $f_{\alpha,W}: \FPow(\Gamma) \to \Rp$ by $f_{\alpha,W}(F):=h_W(\alpha^F)$. We are now able to prove the main proposition.
\begin{prop}
\label{p.satornweiss}
For each cover $\alpha \in \Sigma_V$ and each invariant sublattice $W \in \SLPowg{\alpha}{V}$ the function $f_{\alpha,W}$ satisfies the conditions of Ornstein-Weiss lemma (Proposition \ref{p.ornsteinweiss}).
\end{prop}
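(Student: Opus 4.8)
The plan is to verify the three hypotheses of Proposition \ref{p.ornsteinweiss} for $f_{\alpha,W}(F)=h_W(\alpha^F)$ by reducing each one to a structural property of the functional $h_W$ on $\Sigma_W$, combined with three elementary identities for $\alpha^F$. The identities are: for $F_1\subseteq F_2$ one has $\alpha^{F_2}=\alpha^{F_1}\Sup\bigl(\bigvee_{f\in F_2\without F_1}f\alpha\bigr)$, so $\alpha^{F_2}\succeq\alpha^{F_1}$ by (\ref{eq.covvmonot}); next $\alpha^{F_1\cup F_2}=\alpha^{F_1}\Sup\alpha^{F_2}$, straight from the definition of $\alpha^F$ and associativity of $\Sup$; and finally, using $(fg)\alpha=g(f\alpha)$ together with $g\alpha\Sup g\beta=g(\alpha\Sup\beta)$, one gets $\alpha^{Fg}=g\,\alpha^F$. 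All the covers appearing here lie in $\Sigma_W$ because $W$ is $\Gamma$-invariant and closed under $\Sup$, so $h_W$ is defined on them. It therefore suffices to prove that $h_W$ is (i) monotone with respect to $\succeq$, (ii) subadditive with respect to $\Sup$, and (iii) invariant under the $\Gamma^{\op}$-action.

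Properties (i) and (ii) both follow from the infimum structure of $h_W$ and transitivity of $\succeq$. For monotonicity I would argue that $\alpha'\succeq\alpha$ implies $h_W(\alpha')\geq h_W(\alpha)$: any family $\beta_1,\dots,\beta_n\in\Sigma_W$ admissible for $\alpha'$, meaning $\bigvee_j\beta_j\succeq\alpha'$, is also admissible for $\alpha$, since $\bigvee_j\beta_j\succeq\alpha'\succeq\alpha$ gives $\bigvee_j\beta_j\succeq\alpha$ by transitivity; hence the infimum defining $h_W(\alpha')$ ranges over a subset of the one defining $h_W(\alpha)$. Applying this to $\alpha^{F_2}\succeq\alpha^{F_1}$ yields condition (a). For subadditivity, given families admissible for $\alpha$ and for $\beta$, their union is admissible for $\alpha\Sup\beta$: if $\gamma_1\succeq\alpha$ and $\gamma_2\succeq\beta$ then $\gamma_1\Sup\gamma_2\succeq\gamma_1\succeq\alpha$ and likewise $\gamma_1\Sup\gamma_2\succeq\gamma_2\succeq\beta$ by (\ref{eq.covvmonot}), whence $\gamma_1\Sup\gamma_2\succeq\alpha\Sup\beta$ by the characterization $\gamma\succeq(\alpha\Sup\beta)\Leftrightarrow(\gamma\succeq\alpha$ and $\gamma\succeq\beta)$. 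Taking infima separately gives $h_W(\alpha\Sup\beta)\leq h_W(\alpha)+h_W(\beta)$, and applied to $\alpha^{F_1\cup F_2}=\alpha^{F_1}\Sup\alpha^{F_2}$ this is condition (b).

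For invariance (iii) I would observe that $\delta\mapsto g\delta$ is a bijection of $\Sigma_W$ (its inverse is the action of the inverse element), preserves $\succeq$ in both directions, commutes with $\bigvee$, and preserves $N$ since $N(g\delta)=N(\delta)$. Crucially, each $\pi_\gamma$ acts on $V$ by an automorphism of $\bfV$ in $\MDL$, hence by a lattice isomorphism preserving $m$; thus $S(g\delta)=S(\delta)$ and the multiset $\{m(a):a\in\delta\}$ is unchanged, so $h^*(g\delta)=h^*(\delta)$. Chaining these through the definitions, the substitution $\beta=g\beta'$ in the supremum defining $\hat{h}_W(g\delta)$ shows $\hat{h}_W(g\delta)=\hat{h}_W(\delta)$, and then the substitution $\beta_j=g\gamma_j$ in the infimum defining $h_W(g\delta)$ shows $h_W(g\delta)=h_W(\delta)$. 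Combined with $\alpha^{Fg}=g\,\alpha^F$ this gives $f_{\alpha,W}(Fg)=f_{\alpha,W}(F)$, condition (c).

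I expect none of the three conditions to be deep once the auxiliary identities are in place; the argument is essentially bookkeeping around the nested $\sup/\inf$. The step requiring the most care is the invariance (iii): one must genuinely use that the representation takes values in $\Aut(\bfV)$, so that the generators act as $m$-preserving lattice automorphisms — without $m$-preservation, $h^*$ and hence $\hat{h}_W$ need not be invariant — and one must check that the change of variables $\beta=g\beta'$ simultaneously respects the refinement constraint $\beta\succeq\delta$, the cardinality constraint $N(\beta)\leq N(\delta)$, and the value $h^*(\beta)$. A secondary subtlety is that $\succeq$ is only a quasiorder, but antisymmetry is never invoked; only transitivity and compatibility of $\succeq$ with $\Sup$ and with the action are used, and all of these are available.
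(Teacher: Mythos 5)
Your proof is correct and follows essentially the same route as the paper's: monotonicity and subadditivity of $h_W$ come from comparing the admissible families in the defining infimum, and right-invariance is pushed down through $h_W$, $\hat h_W$ and $h^*$ by the change of variables $\beta = g\beta'$, using that the action is by $m$-preserving automorphisms. The only imprecision is the claimed identity $\alpha^{F_1\cup F_2}=\alpha^{F_1}\Sup\alpha^{F_2}$, which fails as an equality of sets when $F_1\cap F_2\neq\varnothing$ since $\Sup$ is not idempotent on covers; the paper only asserts $\alpha^{F_1}\Sup\alpha^{F_2}\succeq\alpha^{F_1\cup F_2}$ and then invokes monotonicity of $h_W$ under $\succeq$, which is exactly the repair available from your own step (i).
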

\begin{proof}
(i) Let us show that $f_{\alpha,W}$ is monotone, i.e. for two arbitrary finite subsets $F_1 \subseteq F_2 \subset \Gamma$ implies that $f_{\alpha,W}(F_1) \leq f_{\alpha,W}(F_2)$. Indeed, let $F_3 = F_2 \setminus F_1$, then $\alpha^{F_2} = \alpha^{F_1} \Sup \alpha^{F_3} \succeq \alpha^{F_1}$ by equation (\ref{eq.covvmonot}). Thus

\begin{align*}
&h_W(\alpha^{F_1})=\inf\{ \sum\limits_{j=1}^n \hat h_W(\beta_j): n \in \N, \ \beta_1, \dots, \beta_n \in \Sigma_W \text{ s.t. } \bigvee\limits_{j=1}^n \beta_j \succeq \alpha^{F_1} \}\\
&\leq \inf\{ \sum\limits_{j=1}^n \hat h_W(\beta_j): n \in \N, \ \beta_1, \dots, \beta_n \in \Sigma_W \text{ s.t. } \bigvee\limits_{j=1}^n \beta_j \succeq \alpha^{F_1} \Sup \alpha^{F_3} \} \\
&= h_W(\alpha^{F_2}).
\end{align*}

(ii) Now we show that $f_{\alpha,W}$ is subadditive, i.e. $f_{\alpha,W}(F_1 \cup F_2) \leq f_{\alpha,W}(F_1) + f_{\alpha,W}(F_2)$ holds for two arbitrary finite subsets $F_1,F_2 \subset \Gamma$. For that observe that $\alpha^{F_1} \Sup \alpha^{F_2} \succeq \alpha^{F_1 \cup F_2}$, hence $h_W(\alpha^{F_1 \cup F_2}) \leq h_W(\alpha^{F_1} \Sup \alpha^{F_2})$ by a monotonicity argument. Thus it suffices to show that for any two $\alpha,\beta \in \Sigma_W$ the inequality $h_W(\alpha \Sup \beta) \leq h_W(\alpha) + h_W(\beta)$ holds.

Indeed, for any $k,l \in \N$ and sequences of covers $( \beta_{i_2}' )_{i_2=1}^k, ( \beta_{i_3}'' )_{i_3=1}^l$ in $\Sigma_W$ s.t. $\bigvee\limits_{i_2=1}^k \beta_{i_2}' \succeq \alpha$ and $\bigvee\limits_{i_3=1}^l \beta_{i_3}'' \succeq \beta$ the sequence of covers $\beta_1',\beta_2',\dots,\beta_k', \beta_1'',\beta_2'',\dots,\beta_l''$ is also in $\Sigma_W$ and its join refines $\alpha \Sup \beta$. Hence
\begin{align*}
&h_W(\alpha \Sup \beta) \leq h_W(\alpha)+h_W(\beta).
\end{align*}

(iii) Finally, we prove that $f_{\alpha,W}$ is right-invariant, i.e. $f_{\alpha,W}(F g) = f_{\alpha,W}(F)$ holds for every finite $F \subsetneq \Gamma$ and every element $g \in \Gamma$. Observe that $$\bigvee\limits_{f \in F g} f \alpha = \bigvee\limits_{f \in F} (fg) \alpha = g \left(\bigvee\limits_{f \in F} f \alpha \right).$$ Hence it suffices to show that $h_W (g \alpha) = h_W (\alpha)$ holds for arbitrary $\alpha \in \Sigma_V, g \in \Gamma$ and $W$ being arbitrary $\Gamma$-invariant sublattice containing $\alpha$.

Indeed,
\begin{align*}
& h_W(g \alpha) = \inf\{ \sum\limits_{j=1}^n \hat h_W(\beta_j): n \in \N,\ \beta_1,\dots,\beta_n \in \Sigma_W \text{ s.t. } \bigvee\limits_{j=1}^n \beta_j \succeq g \alpha\} \\
&=\inf\{ \sum\limits_{j=1}^n \hat h_W(g(g^{-1} \beta_j)):  n \in \N,\ \beta_1,\dots,\beta_n \in \Sigma_W \text{ s.t. } \bigvee\limits_{j=1}^n (g^{-1} \beta_j) \succeq \alpha \}
\end{align*}
hence it suffices to show that $\hat h_W(g \alpha) = \hat h_W(\alpha)$ for every $\alpha \in \Sigma_V, g \in \Gamma$ and $W$ arbitrary $\Gamma$-invariant sublattice containing $\alpha$. Indeed,
\begin{align*}
\hat h_W(g \alpha ) &= \sup\{ h^*(\beta): \beta \in \Sigma_{W} \text{ s.t. } \beta \succeq g \alpha, N(\beta) \leq N( g \alpha) \} = \\
&=\sup\{ h^*(g(g^{-1} \beta)): g^{-1} \beta \in \Sigma_{W} \text{ s.t. } g^{-1} \beta \succeq \alpha, N(\beta) \leq N(\alpha) \},
\end{align*}
thus we only need to show that $h^*(g \alpha) = h^*(\alpha)$ for any $\alpha \in \Sigma_V, g \in \Gamma$. This in turn follows from the definition of morphism in category $\MDL$, more precisely we use that it preserves measurement function.

\end{proof}

Now we are ready to define the dynamical entropy of a cover $\alpha \in \Sigma_V$ relative to a sublattice $W$ containing $\alpha$.
\begin{prop}[Entropy of a cover relative to sublattice]
\label{p.entropdef}
Consider an abstract dynamical lattice $(V,m,\Omega; \pi)$ with a representation $\pi$ of a discrete amenable group $\Gamma$, and let $(F_n )_{n \in \N}$ be a F{\o}lner sequence in $\Gamma$. Then for all $\alpha \in \Sigma_V$ and all invariant sublattices $W \in \SLPowg{\alpha}{V}$ the limit
\begin{equation}
\label{eq.entropdef}
\lim\limits_{n \to \infty} \frac {h_W(\alpha^{F_n})} {|F_n|}=:h_W(\alpha,\pi)
\end{equation}
exists, is nonnegative and is independent of the F{\o}lner sequence $( F_n )_n$.
\end{prop}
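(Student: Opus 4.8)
The plan is to observe that this proposition is an immediate consequence of the two results already at hand: the Ornstein--Weiss lemma (Proposition~\ref{p.ornsteinweiss}) and the verification carried out in Proposition~\ref{p.satornweiss}. All the genuine work has been done there; what remains is merely to assemble the pieces into a well-defined limit.

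First I would recall that the function $f_{\alpha,W}\colon \FPow(\Gamma) \to \Rp$ defined by $f_{\alpha,W}(F):=h_W(\alpha^F)$ is exactly the object treated in Proposition~\ref{p.satornweiss}, where it was shown to be monotone, subadditive, and right-invariant. These are precisely the three hypotheses (i)--(iii) required of the function $f$ in the Ornstein--Weiss lemma. Hence that lemma applies to $f_{\alpha,W}$ verbatim, with no further conditions to check.

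Applying Proposition~\ref{p.ornsteinweiss} to $f=f_{\alpha,W}$ then yields directly that for every F\o lner sequence $(F_n)_{n\in\N}$ the limit
\begin{equation*}
\lim_{n\to\infty}\frac{f_{\alpha,W}(F_n)}{|F_n|}=\lim_{n\to\infty}\frac{h_W(\alpha^{F_n})}{|F_n|}
\end{equation*}
exists in $\Rp$ and is independent of the chosen F\o lner sequence. This is precisely the content of equation~(\ref{eq.entropdef}), so both existence and independence are settled simultaneously, and the quantity $h_W(\alpha,\pi)$ is well defined.

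Finally, nonnegativity is immediate: since $h_W$ takes values in $\Rp$, each numerator $h_W(\alpha^{F_n})\geq 0$, while $|F_n|>0$, so every quotient in the sequence is nonnegative and therefore so is its limit. I expect no real obstacle at this stage — the substance of the argument lies entirely in establishing the three structural properties of $f_{\alpha,W}$, which was the purpose of Proposition~\ref{p.satornweiss}. The present proposition simply packages that verification together with the Ornstein--Weiss lemma, so the only thing to be careful about is confirming that the hypotheses match exactly (in particular that $f_{\alpha,W}$ is $\Rp$-valued, which follows from the bounds $0 \leq h_W(\alpha) \leq \log N(\alpha)$ established earlier).
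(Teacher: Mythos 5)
Your proposal is correct and follows exactly the paper's own argument: the paper's proof is the one-line remark that the statement follows from Proposition~\ref{p.satornweiss} together with Proposition~\ref{p.ornsteinweiss}, which is precisely the assembly you carry out. Your additional remarks on nonnegativity and on $f_{\alpha,W}$ being $\Rp$-valued are consistent with the bounds established in the paper just before Proposition~\ref{p.satornweiss}.
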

\begin{proof}
Follows from Proposition \ref{p.satornweiss} and Proposition \ref{p.ornsteinweiss}.
\end{proof}

\label{def.entdynlat}
Now we mimic the definitions of both Kolmogorov-Sinai and topological entropies and define the entropy $h_{\MDL}$ of a dynamical lattice $(V,m,\Omega; \pi)$ by
\begin{equation*}
h_{\MDL}(V,m,\Omega; \pi):=\sup\{ h_{\Omega(\alpha)}(\alpha,\pi): \alpha \in \Sigma_V \}.
\end{equation*}

This notion of entropy enjoys a very useful monotonicity property that we will use in the next section to prove some of the key results of this article.
\begin{prop}
\label{p.entmonot}
Let $\bfA = (W,m_W,\Omega_W)$ and $\bfB=(V,m_V,\Omega_V)$ be measured lattices with localization with representations $\pi,\rho$ of $\Gamma$. Let $(\bfA,\pi) \overset{\psi}{\to} (\bfB,\rho)$ be a morphism in $\Hom_{\Gamma}(\bfA,\bfB)$. Then $h_{\MDL}(\bfA,\pi) \geq h_{\MDL}(\bfB,\rho)$.
\end{prop}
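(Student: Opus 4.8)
The plan is to unwind the arrow $\psi$ into its underlying lattice embedding and then show that each single term indexed by a cover $\alpha \in \Sigma_V$ in the supremum defining $h_{\MDL}(\bfB,\rho)$ reappears, with the \emph{same} value, among the terms defining $h_{\MDL}(\bfA,\pi)$. By the definition of morphisms in $\MDL$, the arrow $\psi$ has the form $\psi = \Phi^{\op}$ for a lattice embedding $\Phi: V \to W$ with $m_W(\Phi(a)) = m_V(a)$ for all $a \in V$ and $\Omega_W(\Phi(\alpha)) = \Phi(\Omega_V(\alpha))$ for all $\alpha \in \Sigma_V$; moreover $\Gamma$-equivariance of $\psi$ means exactly that $\Phi$ intertwines the two $\Gamma^{\op}$-actions, i.e. $\Phi(g a) = g\,\Phi(a)$ for all $g \in \Gamma$ and $a \in V$. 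Being a lattice embedding, $\Phi$ is an order embedding, so it preserves and reflects both $\leq$ and the refinement quasiorder $\succeq$; it preserves finite joins; and it sends covers to covers. In particular $\Phi(\alpha) \in \Sigma_W$, and since $\alpha \subseteq \Omega_V(\alpha)$ we get $\Phi(\alpha) \in \Sigma_{\Omega_W(\Phi(\alpha))}$.

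The heart of the argument is that $\Phi$ restricts to an isomorphism of measured lattices with localization
\[
\Phi: (\Omega_V(\alpha), m_V, \Omega_V; \rho) \longrightarrow (\Omega_W(\Phi(\alpha)), m_W, \Omega_W; \pi)
\]
carrying $\alpha$ to $\Phi(\alpha)$. Indeed, the localization identity gives $\Omega_W(\Phi(\alpha)) = \Phi(\Omega_V(\alpha))$, so the codomain is precisely the image, and injectivity of $\Phi$ makes this restriction a lattice isomorphism; equivariance carries the invariant sublattice $\Omega_V(\alpha)$ (invariant as is required for $h_{\Omega_V(\alpha)}(\alpha,\rho)$ to be defined) onto the invariant sublattice $\Omega_W(\Phi(\alpha))$ compatibly with the actions; and measurement-preservation carries $m_V$ to $m_W$. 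Consequently the induced bijection $\beta \mapsto \Phi(\beta)$ from $\Sigma_{\Omega_V(\alpha)}$ onto $\Sigma_{\Omega_W(\Phi(\alpha))}$ preserves every ingredient of the entropy: $S(\Phi(\beta)) = S(\beta)$ and $N(\Phi(\beta)) = N(\beta)$ (since $m$ is preserved and $\Phi$ is injective), whence $h^*(\Phi(\beta)) = h^*(\beta)$; and since $\succeq$ is preserved and reflected, the constrained supremum defining $\hat h_W$ and the constrained infimum defining $h_W$ match term by term. This yields $\hat h_{\Omega_W(\Phi(\alpha))}(\Phi(\beta)) = \hat h_{\Omega_V(\alpha)}(\beta)$ and then $h_{\Omega_W(\Phi(\alpha))}(\Phi(\beta)) = h_{\Omega_V(\alpha)}(\beta)$ for every $\beta \in \Sigma_{\Omega_V(\alpha)}$.

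Finally I would push this through the Ornstein--Weiss limit. Because $\Phi$ is equivariant and join-preserving, $\Phi(\alpha^{F}) = \Phi(\alpha)^{F}$ for every finite $F \subset \Gamma$, so $h_{\Omega_W(\Phi(\alpha))}(\Phi(\alpha)^{F_n}) = h_{\Omega_V(\alpha)}(\alpha^{F_n})$ for each $n$; dividing by $|F_n|$ and letting $n \to \infty$, the limits existing by Proposition \ref{p.entropdef}, gives
\[
h_{\Omega_W(\Phi(\alpha))}(\Phi(\alpha),\pi) = h_{\Omega_V(\alpha)}(\alpha,\rho).
\]
Since $\Phi(\alpha) \in \Sigma_W$ has localization $\Omega_W(\Phi(\alpha))$, the right-hand side is one of the terms in the supremum defining $h_{\MDL}(\bfA,\pi)$. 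Taking the supremum over all $\alpha \in \Sigma_V$ then gives $h_{\MDL}(\bfB,\rho) = \sup_{\alpha \in \Sigma_V} h_{\Omega_W(\Phi(\alpha))}(\Phi(\alpha),\pi) \leq \sup_{\beta \in \Sigma_W} h_{\Omega_W(\beta)}(\beta,\pi) = h_{\MDL}(\bfA,\pi)$, which is the claim.

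The main obstacle will be the bookkeeping in the second paragraph: verifying that the index set of the supremum in $\hat h_W$ and of the infimum in $h_W$ are carried bijectively onto one another by $\Phi$, together with the numerical constraint $N(\beta) \leq N(\alpha)$ and the refinement condition $\bigvee_j \beta_j \succeq \alpha$. This is exactly where one must use both that $\Phi|_{\Omega_V(\alpha)}$ is \emph{onto} $\Omega_W(\Phi(\alpha))$ (so that no cover on the $W$-side is overlooked) and that it is an order embedding preserving $m$ (so that the constraints transfer in both directions). Once this correspondence is established, the invariance of $h^*$, $\hat h_W$, $h_W$, and of the limit is routine.
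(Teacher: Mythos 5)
Your proposal is correct and follows essentially the same route as the paper: the paper's proof is exactly the chain $h_{\MDL}(\bfA,\pi) \geq \sup_{\alpha \in \Sigma_V} h_{\Omega_W(\Phi(\alpha))}(\Phi(\alpha),\pi) = \sup_{\alpha \in \Sigma_V} h_{\Phi(\Omega_V(\alpha))}(\Phi(\alpha),\pi) = \sup_{\alpha \in \Sigma_V} h_{\Omega_V(\alpha)}(\alpha,\rho) = h_{\MDL}(\bfB,\rho)$, using the localization identity and the fact that the measure-preserving, equivariant embedding $\Phi$ transports all entropy data. Your second paragraph simply spells out in more detail the transfer step that the paper asserts in one sentence.
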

\begin{proof}
Let $\psi = \Phi^{\op}$, where $\Phi$ is the corresponding embedding. Then
\begin{align*}
&h_{\MDL}(W,m_W,\Omega_W; \pi)= \\
&=\sup\{ h_{\Omega_W(\alpha)}(\alpha, \pi): \alpha \in \Sigma_W \} \geq \sup\{ h_{\Omega_W(\Phi(\alpha))}(\Phi(\alpha), \pi): \alpha \in \Sigma_V \} = \\
& \overset{(\ast)}{=} \sup\{ h_{\Phi(\Omega_V(\alpha))}(\Phi(\alpha), \pi): \alpha \in \Sigma_V \} \overset{(\ast \ast)}{=} \sup\{ h_{\Omega_V(\alpha)}(\alpha, \rho): \alpha \in \Sigma_V\} = \\
&= h_{\MDL}(V,m_V,\Omega_V; \rho).
\end{align*}
Apart from the equality $(\ast \ast)$, the proof is rather straightforward. In the equality $(\ast)$ we use that $\Omega_W(\Phi(\alpha)) = \Phi(\Omega_V(\alpha))$ by the definition of morphisms in $\MDL$. In the proof of $(\ast \ast)$ we have used that $m_W(\Phi(a))=m_V(a)$ for all $a \in \alpha \in \Sigma_V$, and that the morphism $\Phi$ is a lattice homomorphism intertwining the action of $\Gamma$. Hence the entropy of $\alpha$ computed with respect to $\Omega(\alpha)$ in the lattice $V$ equals the entropy of $\Phi(\alpha)$ computed with respect to $\Phi(\Omega(\alpha))$ in the lattice $W$.

\end{proof}

This result admits a categorical interpretation.
\begin{cor}\label{cor.entfunctor}
The correspondence $h_{\MDL}: (\bfA,\pi) \mapsto h_{\MDL}(\bfA,\pi) \in [0,\infty]$ is a covariant functor from $\Rep{\Gamma}{\MDL}$ to the poset category $[0,\infty]$ of extended positive reals.
\end{cor}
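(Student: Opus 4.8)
The plan is to unwind the definition of a functor into the thin (poset) category $[0,\infty]$ and observe that, once the object assignment is in place, the only substantive condition to verify is the existence of the relevant arrows, which is precisely the content of Proposition \ref{p.entmonot}. First I would recall that a poset, regarded as a category, is \emph{thin}: between any two objects there is at most one morphism, and a (necessarily unique) arrow $x \to y$ exists exactly when $x$ and $y$ are comparable in the chosen order. The practical consequence is that the two functor axioms, preservation of identities and preservation of composition, are satisfied automatically for \emph{any} order-respecting assignment on objects, since each hom-set of $[0,\infty]$ has at most one element and hence there is simply nothing to check beyond arrow-existence.

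Next I would fix the orientation of the order. Proposition \ref{p.entmonot} states that a morphism $\psi \colon (\bfA,\pi) \to (\bfB,\rho)$ in $\Rep{\Gamma}{\MDL}$ forces $h_{\MDL}(\bfA,\pi) \geq h_{\MDL}(\bfB,\rho)$. For the correspondence $h_{\MDL}$ to be \emph{covariant}, the target must therefore be $[0,\infty]$ with the order under which arrows point from larger values to smaller ones; that is, $\Hom(x,y)$ is nonempty precisely when $x \geq y$. With this convention, covariance is exactly the assertion that every morphism of $\Rep{\Gamma}{\MDL}$ is carried to a bona fide arrow of the poset category.

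I would then assemble the functor directly. On objects, $h_{\MDL}$ sends $(\bfA,\pi)$ to the extended real $h_{\MDL}(\bfA,\pi) \in [0,\infty]$ defined in Subsection \ref{ss.entdynlat}. On a morphism $\psi \colon (\bfA,\pi) \to (\bfB,\rho)$, I would send it to the unique arrow $h_{\MDL}(\bfA,\pi) \to h_{\MDL}(\bfB,\rho)$, whose existence is guaranteed by Proposition \ref{p.entmonot}. Identities are mapped to identities and composites to composites purely by thinness of the target, so both functor axioms hold at once. I do not anticipate any real obstacle: the entire mathematical weight sits in Proposition \ref{p.entmonot}, already proved, and the only conceptual point is the (standard) observation that a covariant functor into a thin category amounts to a monotone map of posets. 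The single subtlety worth stating explicitly is the orientation of $[0,\infty]$, which must be chosen so that entropy \emph{decreasing} along arrows corresponds to arrows being \emph{preserved}.
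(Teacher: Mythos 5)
Your proposal is correct and follows essentially the same route as the paper: define $\Hom(x,y)$ in $[0,\infty]$ to be nonempty exactly when $x \geq y$, note that thinness of the target makes identity and composition preservation automatic (the paper phrases this as transitivity of the relation), and derive arrow-existence from Proposition \ref{p.entmonot}. No gaps.
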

\begin{proof}
Objects of $[0,\infty]$ are extented positive reals. For $x,y \in [0,\infty]$ the set $\Hom(x, y)$ consists of exactly one arrow $\geq$ if and only if $x \geq y$, and is empty otherwise. Then the statement follows from the previous proposition and the transitivity of relation.
\end{proof}

\subsection{Entropy of representations on abstract categories}
Let $\uC$ be a category, $\Rep{\Gamma}{\uC}$ be the associated category of representations of a discrete amenable group $\Gamma$, and $\uM: \Rep{\Gamma}{\uC} \to \Rep{\Gamma}{\MDL}$ be a measurement functor. We define the entropy of the representation $(\bfA,\pi) \in \Rep{\Gamma}{\uC}$ associated with the measurement functor $\uM$ by
\begin{equation}
h((\bfA,\pi),\uM):=h_{\MDL}(\uM(\bfA,\pi)).
\end{equation}

An important property of topological and Kolmogorov-Sinai entropy is that it decreases when passing to factors. One of the main results of this article is that our abstractly defined entropy also does decrease when moving down the arrows.
\begin{prop}[Left Entropic Inequality]
\label{p.leftentineq}
Let $(\bfA,\pi) \overset{\psi}{\to} (\bfB,\rho)$ be a morphism in $\Hom_{\Gamma}(\bfA,\bfB)$. Then $h((\bfA,\pi), \uM) \geq h((\bfB,\rho), \uM)$.
\end{prop}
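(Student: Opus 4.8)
The plan is to reduce the statement immediately to the monotonicity property of $h_{\MDL}$ established in Proposition \ref{p.entmonot}, using nothing beyond the fact that $\uM$ is a covariant functor. All of the genuine work — the Ornstein--Weiss machinery and the behaviour of covers, localizations and measurement functions under lattice embeddings — has already been carried out at the level of $\Rep{\Gamma}{\MDL}$, so at this stage the argument is purely formal.

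First I would apply the functor $\uM$ to the given morphism $\psi \in \Hom_{\Gamma}(\bfA,\bfB)$. Since $\uM \colon \Rep{\Gamma}{\uC} \to \Rep{\Gamma}{\MDL}$ is covariant, it sends the $\Gamma$-equivariant arrow $\psi \colon (\bfA,\pi) \to (\bfB,\rho)$ to a morphism
\begin{equation*}
\uM(\psi) \colon \uM(\bfA,\pi) \to \uM(\bfB,\rho)
\end{equation*}
in $\Rep{\Gamma}{\MDL}$. Because $\uM$ lands in $\Rep{\Gamma}{\MDL}$ by hypothesis, this $\uM(\psi)$ is \emph{automatically} a morphism of abstract dynamical lattices: it is of the form $\Phi^{\op}$ for a lattice embedding $\Phi$ pointing in the opposite direction, $\Phi$ preserves the measurement and localization functions, and it intertwines the induced representations $\overline{\pi}$ and $\overline{\rho}$. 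These are exactly the hypotheses required to invoke Proposition \ref{p.entmonot}; no separate verification is needed since they are built into the definition of the target category.

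Next I would apply Proposition \ref{p.entmonot} with $\uM(\bfA,\pi)$ in the role of the source and $\uM(\bfB,\rho)$ in the role of the target. This yields directly
\begin{equation*}
h_{\MDL}(\uM(\bfA,\pi)) \geq h_{\MDL}(\uM(\bfB,\rho)).
\end{equation*}
Finally, unwinding the definition $h((\cdot,\cdot),\uM) = h_{\MDL}(\uM(\cdot,\cdot))$ converts this inequality into the desired conclusion $h((\bfA,\pi),\uM) \geq h((\bfB,\rho),\uM)$.

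I do not expect any real obstacle: the entire analytic content of the statement is absorbed by Proposition \ref{p.entmonot}, and the sole point demanding care is the bookkeeping of arrow directions, since morphisms in $\MDL$ point opposite to the underlying lattice embeddings. One must check only that the covariance of $\uM$ is compatible with this convention, so that a factor map downstairs in $\Rep{\Gamma}{\uC}$ is carried to a factor map in $\Rep{\Gamma}{\MDL}$; once the orientation is correct, the monotonicity of $h_{\MDL}$ delivers the inequality in the right direction.
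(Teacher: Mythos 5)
Your argument is exactly the paper's proof: apply the covariant functor $\uM$ to $\psi$ to obtain a morphism $\uM(\psi) = \Phi^{\op}\colon \uM(\bfA,\pi) \to \uM(\bfB,\rho)$ in $\Rep{\Gamma}{\MDL}$, invoke Proposition \ref{p.entmonot}, and unwind the definition of $h((\cdot,\cdot),\uM)$. The proposal is correct and matches the paper's reasoning, including the observation that the entire analytic content lives in Proposition \ref{p.entmonot}.
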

\begin{proof}
Suppose $\uM(\bfA,\pi) = (\overline \bfA,\overline \pi)$, $\uM(\bfB,\rho)=(\overline \bfB,\overline \rho)$. Here $\overline \bfA = (W,m_W,\Omega_W)$, $\overline \bfB=(V,m_V,\Omega_V)$ are measured lattices with localization and $\uM(\psi) = \Phi^{\op}: \overline \bfA \to \overline \bfB$ is the image of morphism $\psi$. Then the statement follows immediately from Proposition \ref{p.entmonot}.
\end{proof}

In the notation for the entropy of $(\bfA,\pi) \in \Rep{\Gamma}{\uC}$ we always say explicitly what measurement functor we use. It is natural to ask how different measurement functors can be compared, i.e., what are the `arrows' between measurement functors. If there is a morphism between two different measurement functors, do we also get monotonicity of corresponding entropies?

To answer these questions we recall the standard notion of a \textbf{natural transformation} between functors first. So let for the moment $\uC,\uD$ be two arbitrary categories and $\calF, \calG: \uC \to \uD$ be covariant functors. A family of morphisms $\alpha = \{ \alpha_{\bfX} \}_{\bfX \in \ob(\uC)}$ in $\uD$, where $\alpha_{\bfX}: \calF(\bfX) \to \calG(\bfX)$ for every $\bfX \in \ob(\uC)$, is called a natural transformation between functors $\calF$ and $\calG$ if for every morphism $\phi: \bfX \to \bfY$ in $\uC$ the diagram
\begin{equation*}
 \xymatrix{ \calF(\bfX) \ar[d]_{\alpha_{\bfX}} \ar[r]^{\calF(\phi)} & \calF(\bfY) \ar[d]^{\alpha_{\bfY}} \\
               \calG(\bfX) \ar[r]_{\calG(\phi)} & \calG(\bfY)  }
\end{equation*}
commutes. Then we write $\alpha: \calF \to \calG$, i.e. we call $\alpha$ an arrow between functors. This can be justified by defining the \textbf{functor category} $[\uC,\uD]$ whose objects are covariant functors from $\uC$ to $\uD$ and whose arrows are natural transformations\footnote{There is a subtlety here: the collection of all natural transformations between functors is not necessarily a small set, so the `category of functors' is not necessarily locally small.}. Given functors $\calF, \calG, \calH$ with natural transofmations $\alpha: \calF \to \calG$ and $\beta: \calG \to \calH$, we see that $\gamma = \{ \beta_{\bfX} \alpha_{\bfX}\}_{\bfX \in \ob(\uC)}$ is a natural transformation between $\calF$ and $\calH$. We call a natural transformation $\alpha: \calF \to \calG$ a \textbf{natural equivalence} if $\alpha_{\bfX}$ is an isomorphism for every $\bfX \in \ob(\uC)$.

We return back to the main topic, so now $\uC$ is some category and $\Rep{\Gamma}{\uC}$ is the associated category of representations of a discrete amenable group $\Gamma$. Then the collection of all measurement functors from $\Rep{\Gamma}{\uC}$ to $\Rep{\Gamma}{\MDL}$ is a full subcategory of $[\Rep{\Gamma}{\uC}, \Rep{\Gamma}{\MDL}]$. We denote the category of measurement functors from $\Rep{\Gamma}{\uC}$ to $\Rep{\Gamma}{\MDL}$ by $[\Rep{\Gamma}{\uC}, \Rep{\Gamma}{\MDL}]_{\frakM}$. Observe that for a nontrivial\footnote{I.e. nonempty.} category $\uC$ the categories $\Rep{\Gamma}{\uC}$ and $[\Rep{\Gamma}{\uC}, \Rep{\Gamma}{\MDL}]_{\frakM}$ are also nontrivial. Indeed, there is always a trivial measurement functor $\uM_0: \Rep{\Gamma}{\uC} \to \Rep{\Gamma}{\MDL}$ that maps a representation $(\bfA,\pi)$ of $\Gamma$ to a trivial representation $(\bfV_0,\mathbf{id})$ of $\Gamma$. Here $\bfV_0 = (V,m,\Omega)$ is a measured lattice with localization with $V=\{ 0, 1\}$, $m(1) = 1, m(0)=0$ and $\Omega(\{ 0, 1\})=V$. Entropy measured with respect to the functor $\uM_0$ is identically zero on $\Rep{\Gamma}{\uC}$.

We now prove the second key `monotonicity' result.
\begin{prop}[Right Entropic Inequality]
\label{p.rightentineq}
Let $\alpha: \uM \to \uN$ be a natural transformation between measurement functors $\uM,\uN: \Rep{\Gamma}{\uC} \to \Rep{\Gamma}{\MDL}$. Then for every object $(\bfA,\pi) \in \Rep{\Gamma}{\uC}$ we have $h((\bfA,\pi),\uM) \geq h((\bfA,\pi),\uN)$. If $\alpha$ is a natural equivalence, then $h((\bfA,\pi),\uM) = h((\bfA,\pi),\uN)$.
\end{prop}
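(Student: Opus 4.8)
The plan is to recognize that this statement is the pointwise shadow, along natural transformations, of the monotonicity already established in Proposition \ref{p.entmonot}; there is essentially no new content beyond unwinding the definitions. First I would rewrite both quantities using the defining equation for entropy with respect to a measurement functor: $h((\bfA,\pi),\uM) = h_{\MDL}(\uM(\bfA,\pi))$ and $h((\bfA,\pi),\uN) = h_{\MDL}(\uN(\bfA,\pi))$, where $\uM(\bfA,\pi)$ and $\uN(\bfA,\pi)$ are objects of $\Rep{\Gamma}{\MDL}$. So everything reduces to comparing the $h_{\MDL}$-values of these two abstract dynamical lattices.

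The key observation is that a natural transformation $\alpha\colon \uM \to \uN$ furnishes, by its very definition, a component morphism
\begin{equation*}
\alpha_{(\bfA,\pi)}\colon \uM(\bfA,\pi) \to \uN(\bfA,\pi)
\end{equation*}
in the target category $\Rep{\Gamma}{\MDL}$; that is, $\alpha_{(\bfA,\pi)} \in \Hom_{\Gamma}(\uM(\bfA,\pi), \uN(\bfA,\pi))$. This is exactly a $\Gamma$-equivariant morphism of measured lattices with localization of the kind appearing in Proposition \ref{p.entmonot}. Applying that proposition with $\uM(\bfA,\pi)$ playing the role of $(\bfA,\pi)$ and $\uN(\bfA,\pi)$ playing the role of $(\bfB,\rho)$, and with $\alpha_{(\bfA,\pi)}$ the connecting morphism, yields at once
\begin{equation*}
h_{\MDL}(\uM(\bfA,\pi)) \geq h_{\MDL}(\uN(\bfA,\pi)),
\end{equation*}
which is precisely $h((\bfA,\pi),\uM) \geq h((\bfA,\pi),\uN)$. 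No further estimate is needed, since all the lattice-theoretic work (preservation of the measurement and localization functions under the embedding, and the resulting inequality of suprema) was already carried out in the proof of Proposition \ref{p.entmonot}.

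For the natural equivalence case, if $\alpha$ is a natural equivalence then each component $\alpha_{(\bfA,\pi)}$ is an isomorphism in $\Rep{\Gamma}{\MDL}$, so it admits a two-sided inverse $\alpha_{(\bfA,\pi)}^{-1}\colon \uN(\bfA,\pi) \to \uM(\bfA,\pi)$, again a morphism in the same category. Applying Proposition \ref{p.entmonot} to this inverse gives the reverse inequality $h_{\MDL}(\uN(\bfA,\pi)) \geq h_{\MDL}(\uM(\bfA,\pi))$, and combining the two inequalities forces equality. The only point requiring care is purely formal, namely the verification that evaluating a natural transformation (respectively a natural equivalence) at an object produces a genuine morphism (respectively isomorphism) of $\Rep{\Gamma}{\MDL}$; this is immediate from the definitions, so there is no real obstacle to overcome here — the substance of the result lives entirely in Proposition \ref{p.entmonot}.
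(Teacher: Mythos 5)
Your proposal is correct and follows the same route as the paper: both unwind the definition $h((\bfA,\pi),\uM)=h_{\MDL}(\uM(\bfA,\pi))$, observe that the component $\alpha_{(\bfA,\pi)}$ is a morphism in $\Rep{\Gamma}{\MDL}$, and invoke Proposition \ref{p.entmonot}, with the equivalence case handled by applying the same argument to the inverse component. The paper even remarks, as you do, that the naturality squares are not actually needed here — only the existence of the component morphisms.
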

\begin{proof}
Let $\uM(\bfA,\pi) = (\overline \bfA,\overline \pi)$, $\uN(\bfA,\pi)=(\overline \bfB,\overline \rho)$ where $\overline \bfA = (W,m_W,\Omega_W)$, $\overline \bfB=(V,m_V,\Omega_V)$ are measured lattices with localization endowed with representations $\overline \pi,\overline \rho$ of $\Gamma$ respectively.

Then by definition $h((\bfA,\pi),\uM) = h_{\MDL}(\overline \bfA,\overline \pi)$ and $h((\bfA,\pi),\uN) = h_{\MDL}(\overline \bfB,\overline \rho)$. Since $\alpha$ is a natural transformation, we get a morphism $\alpha_{(\bfA,\pi)}: (\overline \bfA,\overline \pi) \to (\overline \bfB,\overline \rho)$ in the category $\Rep{\Gamma}{\MDL}$. Then the statement follows from Proposition \ref{p.entmonot}.
\end{proof}
Of course, we have not used the commutative diagram from the definition of a natural transformation in this proof\footnote{I.e. it would suffice to have an infranatural transformation of measurement functors.}, but we will have to use it in order to derive much stronger results, giving a better functorial interpretation of entropy, below.

Given categories $\Rep{\Gamma}{\uC}$ and $[\Rep{\Gamma}{\uC}, \Rep{\Gamma}{\MDL}]_{\frakM}$, we can define a \textbf{product category} $\Rep{\Gamma}{\uC} \times [\Rep{\Gamma}{\uC}, \Rep{\Gamma}{\MDL}]_{\frakM}$ as follows. Objects of $\Rep{\Gamma}{\uC} \times [\Rep{\Gamma}{\uC}, \Rep{\Gamma}{\MDL}]_{\frakM}$ are all pairs $(\bfA, \uM)$, where $\bfA$ is in $\Rep{\Gamma}{\uC}$ and $\uM$ is a measurement functor in $[\Rep{\Gamma}{\uC}, \Rep{\Gamma}{\MDL}]_{\frakM}$. The collection of morphisms between objects $(\bfA, \uM)$ and $(\bfB, \uN)$ is given by the collection of all pairs $(\phi, \alpha)$, where $\phi: \bfA \to \bfB$ and $\alpha: \uM \to \uN$. Given morphisms $(\phi,\alpha): (\bfA, \uM) \to (\bfB, \uN)$ and $(\psi,\beta): (\bfB, \uN) \to (\bfC, \uL)$, we define their composition componentwise by $(\psi \phi, \beta \alpha): (\bfA, \uM) \to (\bfC, \uL)$.

Given the category $\Rep{\Gamma}{\uC} \times [\Rep{\Gamma}{\uC}, \Rep{\Gamma}{\MDL}]_{\frakM}$, one naturally obtains an \textbf{evaluation bifunctor} $\ev$. On objects, it is given as
\begin{align*}
&(\bfA, \uM) \overset{\ev}{\longmapsto} \uM(\bfA),
\end{align*}
where $(\bfA,\uM)$ is in $\Rep{\Gamma}{\uC} \times [\Rep{\Gamma}{\uC}, \Rep{\Gamma}{\MDL}]_{\frakM}$. For objects $(\bfA,\uM), (\bfB,\uN)$ it works on the corresponding morphisms by
\begin{align*}
(\phi,\alpha) \overset{\ev}{\longmapsto} \alpha_{\bfB} \uM(\phi),
\end{align*}
where $(\phi,\alpha)$ is a morphism in $\Hom((\bfA,\uM),(\bfB,\uN))$.

We claim that this is indeed a covariant functor, i.e. it respects composition of morphisms. That is, if $(\phi,\alpha): (\bfA,\uM) \to (\bfB,\uN)$ and $(\psi,\beta): (\bfC,\uN) \to (\bfC,\uL)$, we want that
\begin{equation}
\label{eq.evbifunc}
\ev((\psi \phi), (\beta \alpha)) \eqdef \alpha_{\bfC} \beta_{\bfC} \uM(\psi \phi)= \beta_{\bfC} \uN(\psi) \alpha_{\bfB} \uM(\phi).
\end{equation}

Applying the definition of the natural transformation $\alpha$ to the morphism $\phi$, we conclude that the diagram
\begin{equation*}
 \xymatrix{ \uM(\bfA) \ar[d]_{\alpha_{\bfA}} \ar[r]^{\uM(\phi)} & \uM(\bfB) \ar[d]^{\alpha_{\bfB}} \\
               \uN(\bfA) \ar[r]_{\uN(\phi)} & \uN(\bfB)  }
\end{equation*}
commutes, hence we get identity $\alpha_{\bfB} \uM(\phi) = \uN(\phi) \alpha_{\bfA}$. Applying the definition of the natural transformation $\alpha$ to the morphism $\psi \phi$, we get that the diagram
\begin{equation*}
 \xymatrix{ \uM(\bfA) \ar[d]_{\alpha_{\bfA}} \ar[r]^{\uM(\psi \phi)} & \uM(\bfC) \ar[d]^{\alpha_{\bfC}} \\
               \uN(\bfA) \ar[r]_{\uN(\psi \phi)} & \uN(\bfC)  }
\end{equation*}
commutes, hence we get identity $\uN(\psi \phi) \alpha_{\bfA} = \alpha_{\bfC} \uM(\psi \phi)$. Substituting these identities in the equation (\ref{eq.evbifunc}), we get
\begin{align*}
&\beta_{\bfC} \uN(\psi) \alpha_{\bfB} \uM(\phi) = \beta_{\bfC} \uN(\psi) \uN(\phi) \alpha_{\bfA} = \beta_{\bfC} \uN(\psi \phi) \alpha_{\bfA} =\beta_{\bfC} \alpha_{\bfC} \uM(\psi \phi),
\end{align*}
which shows that $\ev$ is indeed a covariant functor.

Combining these results, we arrive at
\begin{cor}\label{cor.entropbifunc}
Entropy is a (bi)functor:
\begin{equation*}
\Rep{\Gamma}{\uC} \times [\Rep{\Gamma}{\uC},\Rep{\Gamma}{\MDL}]_{\frakM} \overset{h(\cdot,\cdot)}{\longrightarrow} [0, \infty].
\end{equation*}
\end{cor}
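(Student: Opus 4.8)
The plan is to exhibit the entropy $h(\cdot,\cdot)$ as a composition of two functors that have already been shown to be covariant, and then to invoke the elementary fact that a composite of covariant functors is again a covariant functor. Concretely, I would observe that
\[
h(\cdot,\cdot) = h_{\MDL} \circ \ev,
\]
where $\ev: \Rep{\Gamma}{\uC} \times [\Rep{\Gamma}{\uC},\Rep{\Gamma}{\MDL}]_{\frakM} \to \Rep{\Gamma}{\MDL}$ is the evaluation bifunctor verified to be covariant in the discussion of equation (\ref{eq.evbifunc}), and $h_{\MDL}: \Rep{\Gamma}{\MDL} \to [0,\infty]$ is the covariant functor from Corollary \ref{cor.entfunctor}. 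Recall that a bifunctor is by definition just a functor out of a product category, so it suffices to check that $h_{\MDL} \circ \ev$ is a functor and that it agrees with $h(\cdot,\cdot)$.

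First I would check the factorization on objects. For any object $(\bfA,\uM)$ of the product category,
\[
(h_{\MDL} \circ \ev)(\bfA,\uM) = h_{\MDL}(\uM(\bfA)) = h((\bfA,\pi),\uM),
\]
where the final equality is exactly the defining equation (\ref{eq.entfunc}). Hence the two functors have identical object maps. Since the target $[0,\infty]$ is a poset category, between any two objects there is at most one morphism, so the action on arrows is forced; no independent verification on morphisms is needed once we know the composite is a functor and agrees on objects.

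It remains only to record that $h_{\MDL} \circ \ev$ is covariant, which is immediate from covariance of each factor. Unwinding this on a morphism $(\phi,\alpha): (\bfA,\uM) \to (\bfB,\uN)$, the evaluation sends it to the $\Gamma$-equivariant arrow $\alpha_{\bfB}\,\uM(\phi): \uM(\bfA) \to \uN(\bfB)$ in $\Rep{\Gamma}{\MDL}$, and $h_{\MDL}$ then produces the unique arrow of $[0,\infty]$ witnessing $h((\bfA,\pi),\uM) \geq h((\bfB,\rho),\uN)$. There is essentially no obstacle here: all the substance -- functoriality of $\ev$ and the monotonicity of $h_{\MDL}$, the latter resting on Proposition \ref{p.entmonot} -- has been discharged already. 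The only point demanding care is the orientation of arrows in $[0,\infty]$: entropy \emph{decreases} along morphisms, so the unique arrow runs from the larger value to the smaller, matching the $\geq$ convention fixed in Corollary \ref{cor.entfunctor}.
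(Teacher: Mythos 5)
Your proposal is correct and follows essentially the same route as the paper: both factor $h(\cdot,\cdot)$ as $h_{\MDL}\circ\ev$, citing the covariance of $\ev$ established around equation (\ref{eq.evbifunc}) and the functoriality of $h_{\MDL}$ from Corollary \ref{cor.entfunctor}. Your additional remarks on the object-level agreement and the forced arrow map into the poset $[0,\infty]$ are harmless elaborations of the same argument.
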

\begin{proof}
This follows immediately, since $((\bfA,\pi),\uM) \mapsto h((\bfA,\pi),\uM)$ is a composition of the evaluation bifunctor $\ev: \Rep{\Gamma}{\uC} \times [\Rep{\Gamma}{\uC}, \Rep{\Gamma}{\MDL}]_{\frakM} \to \Rep{\Gamma}{\MDL}$ and $h_{\MDL}: \Rep{\Gamma}{\MDL} \to [0,\infty]$, which was show to be a functor in Corollary \ref{cor.entfunctor}.
\end{proof}

It is interesting to observe that one can derive the corollary above in a more general - though also completely informal - setting. Consider some category of physical systems $\PSYS$, some category of observables $\OBS$ and the associated category of measurement functors $[\PSYS, \OBS]_{\frakM}$, which is just some full subcategory of $[\PSYS,\OBS]$. Then evaluation $\ev: (\bfA, \uM) \mapsto \uM(\bfA)$ is \emph{still} a bifunctor from $\PSYS \times [\PSYS,\OBS]_{\frakM}$ to $\OBS$. Suppose furthermore that we are given a poset of complexity values $\CVAL$ and a functor $\tilde h: \OBS \to \CVAL$. Then the `complexity' defined via $(\bfA,\uM) \mapsto \tilde h(\uM(\bfA))$ is a (bi)functor from $\PSYS \times [\PSYS,\OBS]_{\frakM}$ to $\CVAL$.

\subsection{Comparison with the classical notions of entropy} \label{ss.edlvscl}
In this subsection we intend to compare the entropies defined via the measurement functors $\uM_{\PSP}$ and $\uM_{\TOP}$ (introduced in Section \ref{ss.measfunctd}) with the classical notions of Kolmogorov-Sinai and topological entropy for amenable group actions.

\begin{prop}
\label{p.compksent}
Let $\bfX = (\prX, \calM, \mu)$ be a standard probability space with measure algebra $\calM$, $(\bfX,\pi) \in \ob(\Rep{\Gamma}{\PSP})$ be a measure-preserving dynamical system, equipped with action of an amenable group $\Gamma$. Denote by $h_{\KS}(\bfX,\pi)$ the Kolmogorov-Sinai entropy of the system $(\bfX,\pi)$. Then
\begin{equation*}
h_{\KS}(\bfX,\pi) = h((\bfX,\pi),\uM_{\PSP}).
\end{equation*}
\end{prop}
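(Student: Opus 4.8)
The plan is to unwind both quantities to suprema over the same combinatorial data and match them term by term. By definition $h((\bfX,\pi),\uM_{\PSP}) = h_{\MDL}(\uM_{\PSP}(\bfX,\pi)) = h_{\MDL}(\calM,m,\Omega;\overline\pi) = \sup\{h_{\Omega(\alpha)}(\alpha,\pi) : \alpha \in \Sigma_{\calM}\}$, where $m=\mu$, $\overline\pi_g U = \pi_g^{-1}U$, and $\Omega(\alpha)$ is the minimal $\Gamma$-invariant measure subalgebra containing $\alpha$ (Section \ref{sss.measdynex}). On the classical side $h_{\KS}(\bfX,\pi) = \sup_P h_{\KS}(\pi,P)$, the supremum over finite measurable partitions $P$ of $h_{\KS}(\pi,P) = \lim_n |F_n|^{-1} H\bigl(\bigvee_{g\in F_n}\overline\pi_g P\bigr)$ with $H(P) = -\sum_{A\in P}\mu(A)\log\mu(A)$. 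The same Ornstein--Weiss device (Proposition \ref{p.ornsteinweiss}) produces both limits, and for a partition $P$ one has $S(P)=1$, hence $h^*(P)=H(P)$. Since every partition is in particular a cover, the strategy is to prove the two matching facts below and take suprema.

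The first key fact I would establish is that for every finite partition $Q\in\Sigma_W$ inside a measure subalgebra $W$ one has $\hat h_W(Q)=H(Q)$. The point is that the supremum defining $\hat h_W(Q)$ collapses: if $\delta\in\Sigma_W$ satisfies $\delta\succeq Q$ and $N(\delta)\le N(Q)$, then each nonzero $d\in\delta$ lies below a unique nonzero atom of $Q$, so assigning to $d$ that atom gives a map from the nonzero elements of $\delta$ onto the nonzero atoms of $Q$ (surjective because $Q$ is a cover); the cardinality bound forces a bijection realized by equalities $d=A$, whence $\delta=Q$ and $\hat h_W(Q)=h^*(Q)=H(Q)$. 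I would then upgrade this to $h_W(Q)=H(Q)$. The bound $h_W(Q)\le\hat h_W(Q)=H(Q)$ is immediate from the defining infimum (take $n=1$, $\beta_1=Q$). For $h_W(Q)\ge H(Q)$ I disjointify: given $\beta_1,\dots,\beta_n\in\Sigma_W$ with $\bigvee_j\beta_j\succeq Q$, replace each $\beta_j$ by a partition $\delta_j\in\Sigma_W$ refining $\beta_j$ with $N(\delta_j)\le N(\beta_j)$ (the subalgebra is closed under the Boolean operations needed), so $\hat h_W(\beta_j)\ge h^*(\delta_j)=H(\delta_j)$; then $\bigvee_j\delta_j$ is a partition finer than $Q$, and subadditivity and monotonicity of Shannon entropy give $\sum_j\hat h_W(\beta_j)\ge\sum_j H(\delta_j)\ge H(\bigvee_j\delta_j)\ge H(Q)$.

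With these facts the two inequalities follow. For $h_{\MDL}\ge h_{\KS}$ I restrict the supremum to partitions: applying $h_W(Q)=H(Q)$ to the partitions $Q=P^{F_n}=\bigvee_{g\in F_n}\overline\pi_g P\in\Sigma_{\Omega(P)}$ yields $h_{\Omega(P)}(P^{F_n})=H(P^{F_n})$, hence $h_{\Omega(P)}(P,\pi)=h_{\KS}(\pi,P)$, and the supremum over $P$ gives $h_{\MDL}\ge h_{\KS}$. For $h_{\MDL}\le h_{\KS}$, given an arbitrary cover $\alpha$ I let $P_\alpha$ be the finite partition into the atoms of the algebra generated by $\alpha$; since $\alpha$ is a cover, $P_\alpha\succeq\alpha$, and therefore $P_\alpha^{F_n}\succeq\alpha^{F_n}$ for every $n$, with $P_\alpha^{F_n}\in\Sigma_{\Omega(\alpha)}$. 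Using the infimum with the single refining partition $\beta_1=P_\alpha^{F_n}$ together with the first key fact gives $h_{\Omega(\alpha)}(\alpha^{F_n})\le\hat h_{\Omega(\alpha)}(P_\alpha^{F_n})=H(P_\alpha^{F_n})$; dividing by $|F_n|$ and letting $n\to\infty$ yields $h_{\Omega(\alpha)}(\alpha,\pi)\le h_{\KS}(\pi,P_\alpha)\le h_{\KS}(\bfX,\pi)$, and the supremum over $\alpha$ closes the bound.

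The hard part will be the first key fact, specifically pinning down $\hat h_W$ on partitions. The rigidity argument that only $\delta=Q$ competes in $\hat h_W(Q)$ relies delicately on combining the refinement relation $\succeq$ with the counting constraint $N(\delta)\le N(Q)$ and the disjointness of the atoms of $Q$; and the reverse bound $h_W(Q)\ge H(Q)$ needs the disjointification to stay inside the measure subalgebra $W$ and to interact correctly with the subadditivity of $H$. A secondary point requiring care is the bookkeeping with the $\Gamma^{\op}$-action and the convention $\alpha^F=\bigvee_{f\in F}f\alpha$, so that the cover joins $\alpha^{F_n}$ and the classical partition joins $\bigvee_{g\in F_n}\overline\pi_g P$ refer to the same F{\o}lner averages and the two Ornstein--Weiss limits genuinely coincide.
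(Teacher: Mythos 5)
Your proposal is correct and follows essentially the same route as the paper: both reduce to finite partitions, establish $\hat h_W(Q)=h^*(Q)=H(Q)$ for partitions by the pigeonhole/rigidity argument, and then get $h_W(Q)=H(Q)$ via non-growing disjoint refinements inside $W$ combined with monotonicity and subadditivity of Shannon entropy. The only (harmless) organizational difference is in the direction $h_{\MDL}\le h_{\KS}$, where the paper replaces a cover $\alpha$ by a generating disjoint refinement $\alpha'$ with $\Omega(\alpha')=\Omega(\alpha)$ and compares the two suprema, whereas you dominate $h_{\Omega(\alpha)}(\alpha^{F_n})$ directly by $H(P_\alpha^{F_n})$ using the atomic partition $P_\alpha$ of the algebra generated by $\alpha$.
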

\begin{proof}
Recall that the classical entropy of a finite partition $\alpha$ of $\prX$ with respect to a probability measure $\mu$ is defined as
\begin{equation}
\label{eq.entroppart}
h_{\mu}(\alpha,\pi) = \lim\limits_{n \to \infty} \frac 1 {|F_n|} h_{\mu}(\alpha^{F_n}),
\end{equation}
where $h_{\mu}(\beta) = - \sum\limits_{b \in \beta} \mu(b) \log{\mu(b)}$ and $( F_n )_n$ is a F{\o}lner sequence. Then the classical Kolmogorov-Sinai entropy of system $(\bfX,\pi)$ is given by
\begin{equation}
\label{eq.ksentrop}
h_{\KS}(\bfX,\pi) = \sup\{ h_{\mu}(\alpha,\pi): \alpha \text{ finite partition of } \prX \}.
\end{equation}

Let $(\overline \bfX,\overline \pi) = \uM_{\PSP}(\bfX,\pi)$ be the abstract dynamical lattice associated to the system $(\bfX,\pi)$, where $\overline \bfX = (\calM,\mu,\Omega)$ is the measured lattice with localization. We view the measure algebra $\calM$ as a distributive lattice. Measure $\mu$ plays the role of a measurement function, and for a cover $\alpha \in \Sigma_{\calM}$ the localization $\Omega(\alpha)$ is the smallest $\Gamma$-invariant measure subalgebra of $\calM$ containing the family of sets $\alpha$.

Observe that if $\alpha \in \Sigma_{\calM}$ is a cover, then there exists a partition $\alpha' \in \Sigma_{\Omega(\alpha)}$ with $\alpha' \succeq \alpha$ and $\Omega(\alpha')=\Omega(\alpha)$. We call any such $\alpha'$ a \emph{generating disjoint refinement} of $\alpha$. It is essential in this observation that $\Omega(\alpha)$ is a subalgebra containing $\alpha$, it allows to find the required refinement without leaving the sublattice $\Omega(\alpha)$. So, since $\alpha' \succeq \alpha$ and $\Omega(\alpha')=\Omega(\alpha)$, the inequality $h_{\Omega(\alpha')}(\alpha') \geq h_{\Omega(\alpha)}(\alpha)$ holds.  Hence $h(V,m,\Omega)=\sup\{ h_{\Omega(\alpha)}(\alpha,\Gamma): \alpha \in \Sigma_{\calM} \} = \sup\{ h_{\Omega(\alpha)}(\alpha,\Gamma): \alpha \text{ is a partition of } \prX\}$. It follows that it is enough to show that for all partitions $\alpha$ of $\prX$ and all invariant measure subalgebras $W$ of $\calM$, containing $\alpha$ we have $h^*(\alpha)=h_{W}(\alpha)$, since, clearly, $h^*(\alpha) = h_{\mu}(\alpha)$ for partitions. For any partition $\alpha$ and any invariant measure subalgebra $W$, containing $\alpha$ it follows by the pigeonhole principle that $\hat h_{W}(\alpha) = h^*(\alpha)$. For an arbitrary cover $\beta \in \Sigma_{W}$ there is a \emph{non-growing disjoint refinement} $\beta' \in \Sigma_{W}$, i.e. a partition $\beta'$ such that $\beta' \succeq \beta$ and $N(\beta') \leq N(\beta)$. Then, clearly, $\hat h_{W}(\beta') \leq \hat h_{W}(\beta)$.

It is obvious that for any partition $\alpha$ and any invariant measure subalgebra $W$, containing $\alpha$
\begin{equation*}
h_{W}(\alpha) = \inf\{ \sum\limits_{j=1}^n \hat h_{W}(\beta_j): \bigvee\limits_{j=1}^n \beta_j \succeq \alpha, n \in \N, \forall j \ \beta_j \in \Sigma_{W} \} \leq h^*(\alpha).
\end{equation*}
Now, pick any sequence $(\beta_j)_{j=1}^n$ of covers in $\Sigma_W$ s.t. $\bigvee\limits_{j=1}^n \beta_j \succeq \alpha$, and consider the respective non-growing disjoint refinements $(\beta_j')_{j=1}^n$. Then $\sum\limits_{j=1}^n \hat h_W(\beta_j') \leq \sum\limits_{j=1}^n \hat h_W(\beta_j)$ and it follows that it suffices to take the infinum over the sequences $(\beta_j)_{j=1}^n$ of partitions of $\prX$. It only remains to observe that for such $\alpha, (\beta_j)_{j=1}^n$
\begin{equation*}
h^*(\alpha) \leq h^*( \bigvee\limits_{j=1}^n \beta_j) \leq \sum\limits_{j=1}^n h^*(\beta_j)
\end{equation*}
by the standard monotonicity and subadditivity properties of $h_{\mu}$ on partitions.
\end{proof}

Now we prove a similar statement for topological dynamical systems and the measurement functor $\uM_{\TOP}$.
\begin{prop}
\label{p.comptopent}
Let $\bfX = (\uX, \calU)$  be in $\TOP$, and let $(\bfX,\pi) \in \ob(\Rep{\Gamma}{\TOP})$ be topological dynamical system,
equipped with the action $\pi$ of an amenable group $\Gamma$. Denote by $h_{\TOP}(\bfX,\pi)$ the topological entropy of the system $(\bfX,\pi)$. Then
\begin{equation*}
h_{\TOP}(\bfX,\pi) = h((\bfX,\pi), \uM_{\TOP}).
\end{equation*}
\end{prop}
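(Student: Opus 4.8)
The plan is to reduce everything to the classical description of topological entropy via open covers and \emph{minimal subcovers}, and to identify the abstract quantity $h_{\Omega(\alpha)}(\alpha,\overline\pi)$ with the classical entropy of the cover $\alpha$. Recall that for a continuous action of an amenable $\Gamma$ on a compact space $\uX$ one has $h_{\TOP}(\bfX,\pi) = \sup_{\alpha} h_{\TOP}(\alpha,\pi)$, where
\[
h_{\TOP}(\alpha,\pi) = \lim_{n\to\infty}\frac{1}{|F_n|}\log N_{\min}(\alpha^{F_n}),
\]
the supremum runs over finite open covers $\alpha$ of $\uX$, and $N_{\min}(\beta)$ denotes the least cardinality of a subcover of $\beta$; the limit exists by the Ornstein--Weiss lemma applied to $F \mapsto \log N_{\min}(\alpha^{F})$. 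First I would unwind the abstract invariants of the lattice $\overline\bfX = (\calU,m,\Omega)$ produced by $\uM_{\TOP}$. Since $m$ is $1$ on every nonempty open set and $0$ on $\emptyset$, for any cover $\beta \in \Sigma_{\calU}$ one gets $S(\beta) = N(\beta)$ and uniform weights $m(a)/S(\beta) = 1/N(\beta)$, whence $h^*(\beta) = \log N(\beta)$ with $N(\beta)$ the number of nonempty members of $\beta$. Moreover, the constraint $N(\gamma) \le N(\beta)$ in the definition of $\hat h_W$ forces $h^*(\gamma) = \log N(\gamma) \le \log N(\beta)$, with equality attained at $\gamma = \beta$, so $\hat h_W(\beta) = \log N(\beta)$ for every $\Gamma$-invariant sublattice $W \supseteq \beta$.

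The crux is the following identity, which I would isolate as a lemma: for every finite open cover $\alpha$ and every $\Gamma$-invariant sublattice $W$ of $\calU$ containing $\alpha$,
\[
h_W(\alpha) = \log N_{\min}(\alpha).
\]
For the upper bound I would use a \emph{single} cover in the infimum defining $h_W$: a minimal subcover $\alpha^\ast \subseteq \alpha$ lies in $\Sigma_W$, refines $\alpha$ (each member of $\alpha^\ast$ is a member of $\alpha$), and has no empty or redundant members, so $N(\alpha^\ast) = |\alpha^\ast| = N_{\min}(\alpha)$ and hence $h_W(\alpha) \le \hat h_W(\alpha^\ast) = \log N_{\min}(\alpha)$. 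For the lower bound, take any $\beta_1,\dots,\beta_n \in \Sigma_W$ with $\bigvee_{j}\beta_j \succeq \alpha$. Refinement-monotonicity of $N_{\min}$ gives $N_{\min}(\bigvee_j \beta_j) \ge N_{\min}(\alpha)$, while a member $a_1 \Inf \cdots \Inf a_n$ of $\bigvee_j\beta_j$ can be nonempty only if every $a_j$ is, so the number of nonempty members of the join is at most $\prod_j N(\beta_j)$; as a minimal subcover is no larger than the collection of all nonempty members, $N_{\min}(\alpha) \le \prod_j N(\beta_j)$. Taking logarithms yields $\sum_j \hat h_W(\beta_j) = \sum_j \log N(\beta_j) \ge \log N_{\min}(\alpha)$, hence $h_W(\alpha) \ge \log N_{\min}(\alpha)$. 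Note that the common value is independent of $W$, which is exactly what lets me later substitute $W = \Omega(\alpha)$.

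With the lemma in hand I would finish as follows. Fix a cover $\alpha \in \Sigma_{\calU}$; since $\Omega(\alpha)$ is $\Gamma$-invariant and contains $\alpha$, it contains every $f\alpha$ and hence each $\alpha^{F_n}$, so applying the lemma to the cover $\alpha^{F_n}$ and the sublattice $\Omega(\alpha)$ gives $h_{\Omega(\alpha)}(\alpha^{F_n}) = \log N_{\min}(\alpha^{F_n})$. Dividing by $|F_n|$ and passing to the limit (Proposition \ref{p.entropdef}) yields $h_{\Omega(\alpha)}(\alpha,\overline\pi) = \lim_n |F_n|^{-1}\log N_{\min}(\alpha^{F_n}) = h_{\TOP}(\alpha,\pi)$. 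Taking the supremum over all finite open covers $\alpha$, which are precisely the elements of $\Sigma_{\calU}$, gives
\[
h((\bfX,\pi),\uM_{\TOP}) = h_{\MDL}(\overline\bfX,\overline\pi) = \sup_\alpha h_{\Omega(\alpha)}(\alpha,\overline\pi) = \sup_\alpha h_{\TOP}(\alpha,\pi) = h_{\TOP}(\bfX,\pi).
\]
Along the way I would verify the one bookkeeping point that the $\Gamma^{\op}$-action $\overline\pi_\gamma\colon U \mapsto \pi_\gamma^{-1}U$ produces $\alpha^{F_n} = \bigvee_{f\in F_n}\pi_f^{-1}\alpha$, matching the classical iterated cover up to replacing $F_n$ by $F_n^{-1}$, which is harmless since the Ornstein--Weiss limit is independent of the Følner sequence. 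The main obstacle is the lower bound in the lemma: one must see that the infimum built from $h^*$ (which blindly counts \emph{all} nonempty members of a cover) nevertheless recovers the \emph{minimal} subcover cardinality, and this hinges precisely on combining refinement-monotonicity of $N_{\min}$ with the submultiplicativity of the nonempty-count under joins.
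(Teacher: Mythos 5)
Your proposal is correct and follows essentially the same route as the paper: compute $h^*(\beta)=\log N(\beta)$ and $\hat h_W(\beta)=\log N(\beta)$, establish $h_W(\alpha)=H(\alpha)$ by testing the infimum against a minimal subcover for the upper bound and by refinement-monotonicity plus submultiplicativity under joins for the lower bound, then apply this to $\alpha^{F_n}$ with $W=\Omega(\alpha)$ and take suprema. The only cosmetic difference is that you bound the join by the product of the nonempty-member counts $N(\beta_j)$ directly, whereas the paper routes through subadditivity of $H$ and then uses $H(\beta_j)\le\hat h_W(\beta_j)$; both yield the same inequality.
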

\begin{proof}
Recall that the topological entropy of a finite open cover $\alpha$ of $\uX$ is defined as
\begin{equation}
\label{eq.entropcov}
h_{\TOP}(\alpha,\pi) = \lim\limits_{n \to \infty} \frac 1 {|F_n|} H(\alpha^{F_n}),
\end{equation}
where $H(\alpha)=\log \min\{ |\beta|: \beta \subseteq \alpha \text{ is a subcover}\}$ and $( F_n )_n$ is a F{\o}lner sequence. Then the topological entropy of the system $(\bfX,\pi)$ is
\begin{equation}
\label{eq.topentsys}
h_{\TOP}(\bfX,\pi) = \sup\{ h_{\TOP}(\alpha,\pi): \alpha \text{ finite open cover of } \uK \}.
\end{equation}

Let $(\overline \bfX,\overline \pi) = \uM_{\TOP}(\bfX,\pi)$ be the abstract dynamical lattice associated to system $(\bfX,\pi)$, where $(V,m,\Omega)$ is the measured lattice with localization.
Distributive lattice $V$ is the lattice of open subsets of $\bfX$, $m$ is equal to $1$ everywhere
except for the empty set, $m(\varnothing) = 0$, and for an open cover $\alpha$ we have defined
$\Omega(\alpha)$ as the smallest $\Gamma$-invariant topology containing family of open sets $\alpha$.

Let $\alpha$ be a finite open cover, and let $W$ be any invariant topology, containing $\alpha$. It is clear that $h^*(\alpha) = \log N(\alpha)$ and $\hat h_{W}(\alpha) = \log N(\alpha)$ as well. For a minimal subcover $\alpha'$ of $\alpha$ one concludes $\alpha' \succeq \alpha$ and $\alpha' \subseteq W$, so
\begin{align*}
h_{W}(\alpha) &= \inf\{ \sum\limits_{i=1}^n \hat h_{W} (\beta_i): \bigvee\limits_{i=1}^n \beta_i \succeq \alpha, n \in \N, \forall i \ \beta_i \in
\Sigma_{W} \} \leq \\
&\leq\hat h_{W}(\alpha') = H(\alpha)
\end{align*}

Now, pick an arbitrary sequence $(\beta_i)_{i=1}^n$ of covers in $\Sigma_W$ s.t. $\bigvee\limits_{i=1}^n \beta_i \succeq \alpha$. Then $$H( \alpha ) \leq H(\bigvee\limits_{i=1}^n \beta_i) \leq \sum\limits_{i=1}^n H(\beta_i) \leq \sum\limits_{i=1}^n \hat h_{W} (\beta_i)$$ by the standard monotonicity and subadditivity properties of $H$. It follows that $H(\alpha) = h_{W}(\alpha)$ and the proof of the statement is complete.
\end{proof}

\section{Conclusion and further questions}

\subsection{A structural counterxample}
\label{ss.structcex}
We begin by providing a counterexample that was promised in the introduction. Its designation was to show to that if one takes Palm's original notion of a measured distributive lattice entropy \emph{without localization}, then, in general, entropy of a sublattice can be bigger than the entropy of a whole lattice. We take $\Gamma$ to be trivial, this is not essential for our purposes. So consider measured distributive lattice $\bfV = (V,m)$, where $V$ is the distributive lattice of all subsets of $\{ a, b, c\}$, and $\mu$ is a probability measure taking value \emph{very} close to $1$ on atom $b$. Then by the properties of Palm's entropy $\tilde h$ we have $\tilde h(\bfV) \approx 0$, since the system $\bfV$ is \emph{very} close to a singleton as a system in the category $\PSP$.

Now, let $\bfW = (W,m)$ be a sublattice of $V$ containing subsets $$\varnothing, \{ a, b\}, \{ b \}, \{ b, c\}, \{ a, b, c\}$$ and carrying the induced measurement function. Then $\bfW$ is no longer an object of $\PSP$, it is closer to a topological system on $\{ a, b, c\}$ with $W$ being the lattice of opens. It is straightforward to see that such system has Palm's entropy $\tilde h(\bfW) \approx 1$, which is attained at the cover $\alpha:=\{\{ a, b\}, \{ b, c \} \}$.

We call this counterexample `structural' because of its nature: within a representation of a probabilistic system $\bfV$ as a measured lattice there exists a sublattice $\bfW$, which is not of a probabilistic origin. This has lead us to the idea of introducing the localized entropy $h_{\cdot}(\cdot)$. If one was to compute the localized entropy $h_W(\alpha)$ with $W,\alpha$ as above, one would also find that $h_W(\alpha) \approx 1$; but not if one takes $h_V(\alpha)$ instead. The reason is once again `structural': $V$ is the minimal spacial sublattice containing $\alpha$, while $W$ is not spacial at all.

We conjecture that one can also change the perspective a bit. In topological and measure-theoretic dynamics, given a cover $\alpha$, it \emph{does not matter} with respect to which spacial sublattice the entropy $h_{\cdot}(\alpha)$ is computed. So maybe the following approach is more natural: instead of a single spacial lattice $\Omega(\alpha)$, we could prescribe the whole `germ' $\Sub{\alpha}{\overline \bfA}$ of spacial lattices containing $\alpha$. Morphisms in this modified category of measured lattices with germs should also respect the germs. An even more informal interpretation could be that, given $\bfA \in \PSYS$ and $\uM(\bfA) \in \OBS$ for some $\uM \in [\PSYS,\OBS]_{\frakM}$, performing observations $\alpha$ in $\Sigma_{\overline \bfA}$ tells us something about the family $\Sub{\alpha}{\overline \bfA}$ of factors of $\bfA$, and not just about $\bfA$ itself, since we do not see $\bfA$ itself after all. Quantities like entropy/complexity should of course be stable in $\Sub{\alpha}{\overline \bfA}$ given the observations $\alpha$. However, it is not clear how one should enforce the stability of the relative entropy $h_{\cdot}(\alpha)$ on the germ of spacial sublattices containing $\alpha$.

\subsection{Representations on Banach lattices with quasi-interior points and the problem of observability} We have shifted the focus away from lattices of ideals of Banach lattices with quasi-interior points because the structure of these lattices is `obscure', and, furthermore, they do not play a significant role anymore once we arrive at the notion of category of measurement functors. However, we still wonder whether the correspondence suggested by Palm can be made into a measurement functor.

A closely related `philosophical' question is that of observability. It is often said (see \cite{nestruev}) that in reality our measurement tools are structures like $\mathrm{C}^{\infty}(\calM)$, $\Ce(\uK)$ or $\Ell{1}(\prX)$ on underlying phase spaces. We call distinguished regions of phase spaces \emph{observations}, and we can pass to observations by taking ideals of Banach lattices of observables in the particular cases of topological and measure-theoretic dynamical systems. Consequently, we can then define entropy. The question is how good one can combine these two ideas in general.

\bibliographystyle{acm}
\bibliography{measfunctbib}

\def\cprime{$'$}
\begin{thebibliography}{1}

\bibitem{glasner}
{\sc Glasner, E.}
\newblock {\em Ergodic theory via joinings}, vol.~101 of {\em Mathematical
  Surveys and Monographs}.
\newblock American Mathematical Society, Providence, RI, 2003.

\bibitem{gromovo}
{\sc Gromov, M.}
\newblock Topological invariants of dynamical systems and spaces of holomorphic
  maps. {I}.
\newblock {\em Math. Phys. Anal. Geom. 2}, 4 (1999), 323--415.

\bibitem{gromovi}
{\sc Gromov, M.}
\newblock In a search for a structure, part 1: On entropy, June 2013.

\bibitem{krieger}
{\sc Krieger, F.}
\newblock Le lemme d'{O}rnstein-{W}eiss d'apr\`es {G}romov.
\newblock In {\em Dynamics, ergodic theory, and geometry}, vol.~54 of {\em
  Math. Sci. Res. Inst. Publ.} Cambridge Univ. Press, Cambridge, 2007,
  pp.~99--111.

\bibitem{maclane}
{\sc Mac~Lane, S., and Moerdijk, I.}
\newblock {\em Sheaves in geometry and logic}.
\newblock Universitext. Springer-Verlag, New York, 1994.
\newblock A first introduction to topos theory, Corrected reprint of the 1992
  edition.

\bibitem{nestruev}
{\sc Nestruev, J.}
\newblock {\em Smooth manifolds and observables}, vol.~220 of {\em Graduate
  Texts in Mathematics}.
\newblock Springer-Verlag, New York, 2003.
\newblock Joint work of A. M. Astashov, A. B. Bocharov, S. V. Duzhin, A. B.
  Sossinsky, A. M. Vinogradov and M. M. Vinogradov, Translated from the 2000
  Russian edition by Sossinsky, I. S. Krasil{\cprime}schik and Duzhin.

\bibitem{palm}
{\sc Palm, G.}
\newblock A common generalization of topological and measure-theoretic entropy.
\newblock In {\em International {C}onference on {D}ynamical {S}ystems in
  {M}athematical {P}hysics ({R}ennes, 1975)}. Soc. Math. France, Paris, 1976,
  pp.~159--165. Ast\'erisque, No. 40.

\bibitem{schaefer}
{\sc Schaefer, H.~H.}
\newblock {\em Banach lattices and positive operators}.
\newblock Springer-Verlag, New York-Heidelberg, 1974.
\newblock Die Grundlehren der mathematischen Wissenschaften, Band 215.

\end{thebibliography}

\end{document}